\def\endpf{\relax\ifmmode\expandafter\endproofmath\else
  \unskip\nobreak\hfil\penalty50\hskip.75em\hbox{}\nobreak\hfil\bull
  {\parfillskip=0pt \finalhyphendemerits=0 \bigbreak}\fi}
\def\bull{\vbox{\hrule\hbox{\vrule\kern3pt\vbox{\kern6pt}\kern3pt\vrule}\hrule}}
\newtheorem{defn}{Definition}[section]
\newtheorem{lemma}[defn]{Lemma}
\newtheorem{proposition}[defn]{Proposition}
\newtheorem{conjecture}[defn]{Conjecture}
\newtheorem{question}[defn]{Question}
\newcommand{\zz}{{\mathbb Z}}
\newcommand{\rr}{{\mathbb R}}
\newcommand{\spin}{\ifmmode{\rm Spin}\else{${\rm spin}$\ }\fi}
\newcommand{\spinc}{\ifmmode{{\rm Spin}^c}\else{${\rm spin}^c$\ }\fi}
\newcommand{\tors}{{\it Tors}}
\newcommand{\calc}{\mathcal{C}}
\definecolor{Gray}{gray}{0.8}
\newif\ifpic
\begin{document}

\title{Knots and 4-manifolds}
\author[Brendan Owens]{Brendan Owens}
\address{School of Mathematics and Statistics \newline\indent 
University of Glasgow \newline\indent 
Glasgow, G12 8SQ, United Kingdom}
\email{brendan.owens@glasgow.ac.uk}
\date{\today}
\thanks{}

\begin{abstract}  These notes are based on the lectures given by the author during Winter Braids IX in Reims in March 2019.  We discuss slice knots and why they are interesting, as well as some ways to decide if a given knot is or is not slice.  We describe various methods for drawing diagrams of double branched covers of knots in the 3-sphere and surfaces in the 4-ball, and how these can be useful to decide if an alternating knot is slice.  We include a description of the computer search for slice alternating knots due to the author and Frank Swenton.
\end{abstract}

\maketitle

\pagestyle{myheadings}
\markboth{BRENDAN OWENS}{WINTERBRAIDS IX: KNOTS AND 4-MANIFOLDS}


These notes are a written accompaniment to the series of three lectures given by the author during the Winter Braids IX school and conference held in Reims, France in March 2019.  The goal of the lectures was to give an inviting introduction to 4-dimensional aspects of classical knot theory, with a focus on slice knots and some ways to find or obstruct them.  We prioritise intuition over precision, and urge the interested reader to consult other sources for further reading and detail.  We aim to provide such sources as we go along.  

There are many good introductory books on knot theory.  Three that are particularly useful for the material in these notes are those by Lickorish, Livingston, and Rolfsen \cite{lick,liv,rolf}.  An excellent introduction to smooth 4-dimensional topology is the book of Gompf and Stipsicz \cite{GS}.


\section{Lecture 1}

\subsection*{Slice knots} A knot is a smooth embedding of $S^1$ in $S^3$, considered up to smooth isotopy, and sometimes also up to reversal.  Some examples are shown below, drawn in the usual way as projections with crossing information at the double points.  If our knots are oriented we indicate this with an arrow, as shown.
\begin{center}
\includegraphics[width=12cm]{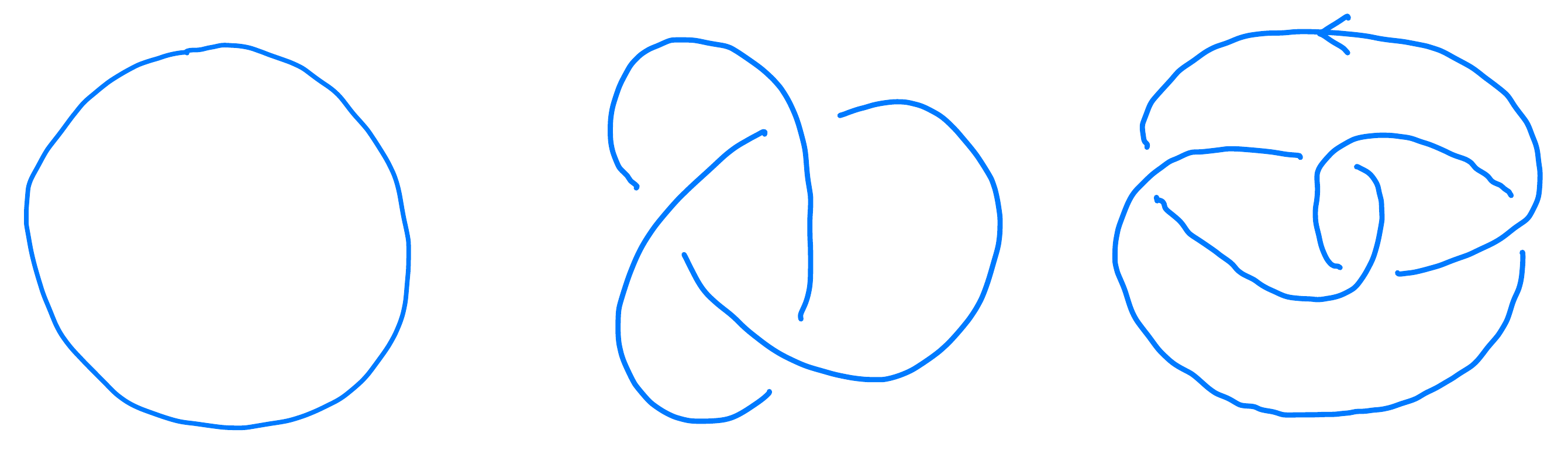} 
\end{center} 
The above diagrams are \emph{alternating}: as you traverse the knot, you alternate between over- and under-crossings.  A knot is called alternating if it admits an alternating diagram.

A 2-knot is an embedding of $S^2$ in $S^4$, again up to isotopy.  This can be either smooth or topologically locally flat, the latter meaning that at each point of the 2-knot, one can find continuous local coordinates in which the 2-knot maps to a coordinate plane \cite{FNOP}.
A knot $K$ is \emph{slice} if $K$ is the intersection of a 2-knot with the equatorial $S^3$ in $S^4$, as indicated in the cartoon below.  We say $K$ is smoothly or topologically slice, according to whether the 2-knot is smooth or just locally flat.
\begin{center}
\includegraphics[width=8cm]{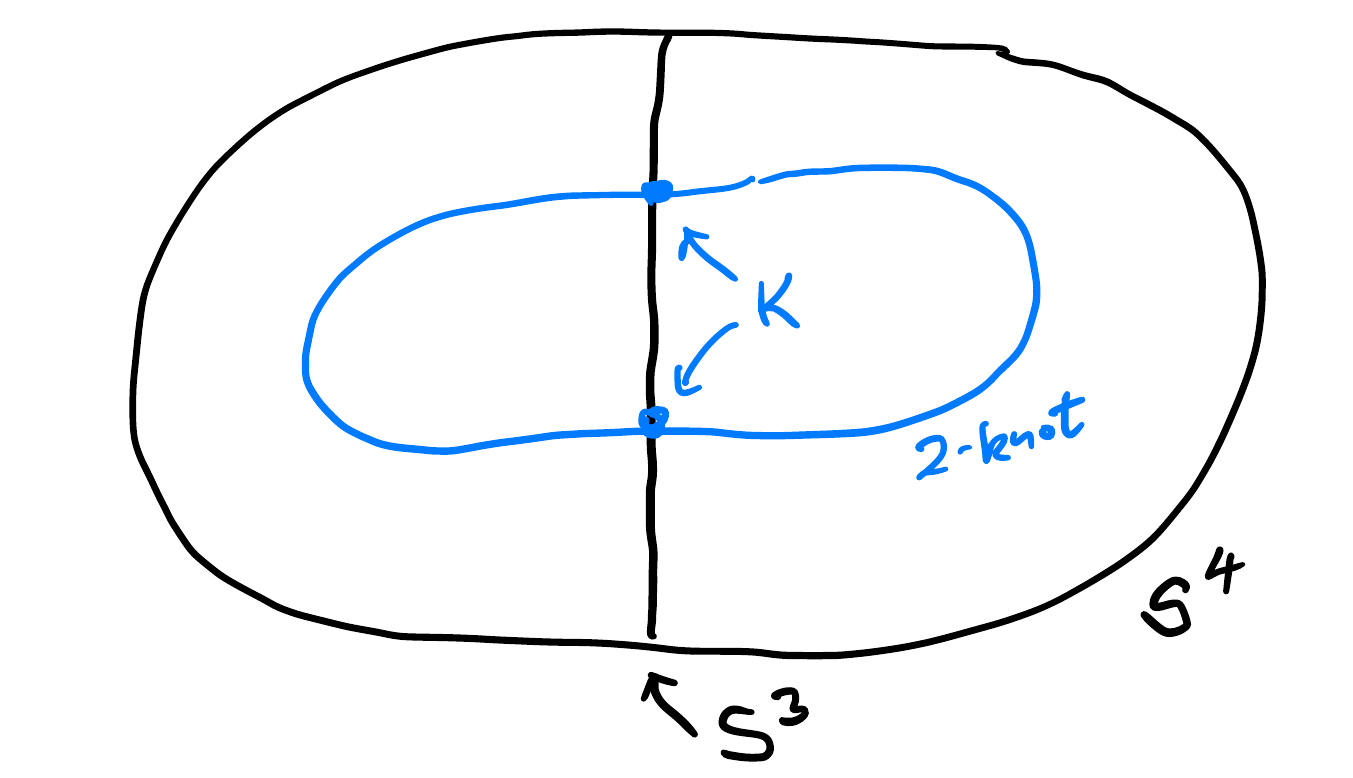} 
\end{center} 

Equivalently $K$ is the boundary of an embedded $D^2$ in $D^4$.  Perhaps surprisingly, this simple concept has fundamental significance in 4-dimensional topology.  We will give some examples, and then briefly describe two applications of slice knots.

\subsection*{Slice and ribbon surfaces in the 4-ball}
It turns out that surfaces in the 4-ball can be represented and studied using knot and link diagrams.  We begin with a trivial example: the unknot bounds an embedded disk in $S^3$.  Thinking of $S^3$ as the boundary of the 4-ball, this also gives a disk in $D^4$; we modify it to obtain a properly embedded disk, with boundary mapping to boundary and interior to interior, by pushing points in the interior of the disk into the interior of the 4-ball.
Two nontrivial slice knots, the stevedore knot and the square knot, are shown below.
\begin{center}
\includegraphics[width=12cm]{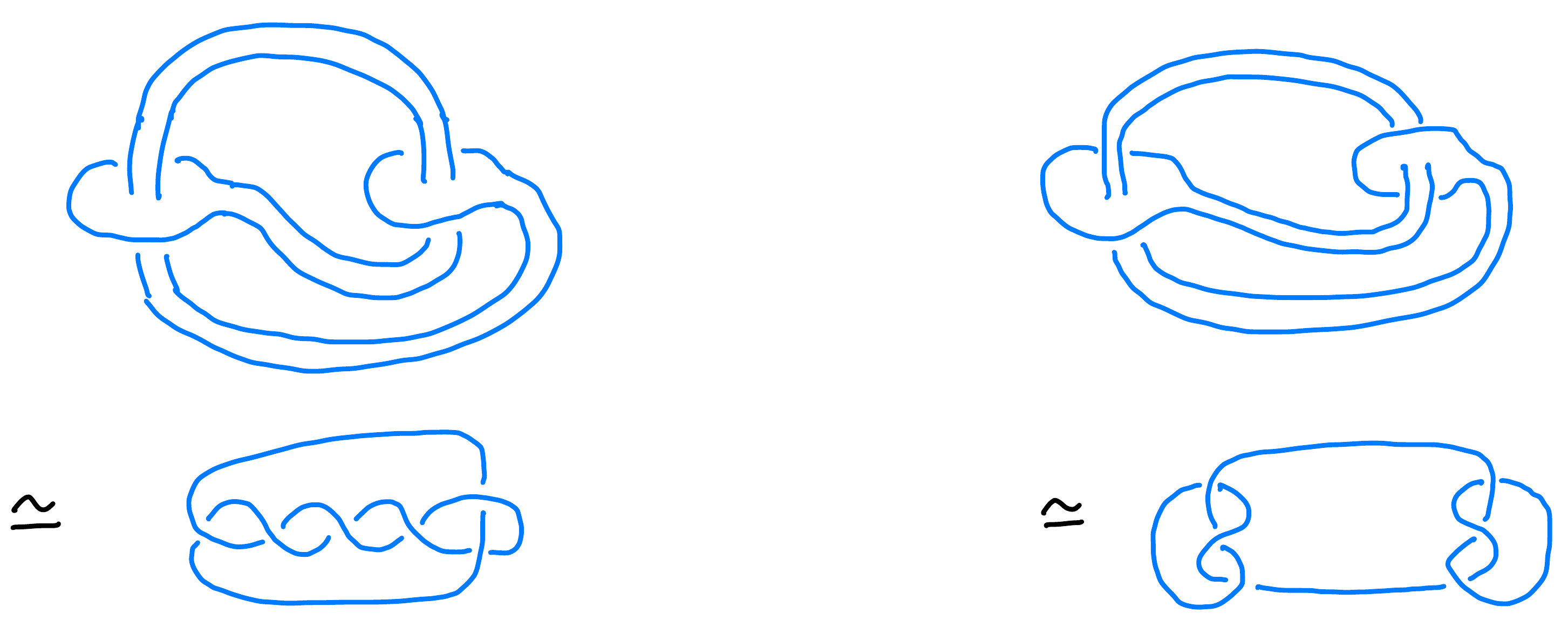} 
\end{center} 
Consider the top left diagram.  One can view this as a narrow strip or band connecting two disks in $S^3$.  The band passes through each of the disks once, making what is called a ribbon singularity: this is a self-intersection of a surface along an interval, such that the preimage of the interval consists of one internal interval in the domain surface and one properly embedded interval, with its endpoints on the boundary.  Thus we see an immersed disk in $S^3$ bounded by the stevedore knot.  As with the trivial disk bounded by the unknot, we can push the interior of this disk into the interior of the 4-ball.  Doing this carefully enables us to resolve the ribbon intersections and obtain an embedded disk.  We simply make sure that the part of the disk containing the internal interval gets pushed further into the 4-ball than the part of the disk containing the properly embedded interval.  Thus the pictures above demonstrate that the stevedore and square knots are slice.  A surface in $S^3$ is called \emph{ribbon-immersed} if it is  embedded except for some ribbon singularities as in these examples.  We see that in general the boundary of a ribbon-immersed disk is a slice knot.

It is helpful to view these disks from the point of view of Morse theory (we will give a brief introduction to Morse theory and handlebodies in Lecture 2).   Given any smooth surface $F$ in the 4-ball, we may isotope it so that the radial distance function restricts to give a Morse function on $F$.  Then we can draw the intersection of $F$ with 3-spheres of fixed distance from the origin; the result is a sequence of link diagrams in $S^3$ called a ``movie".  The movie version of the same slice disk for the stevedore is shown below:
\begin{center}
\includegraphics[width=12cm]{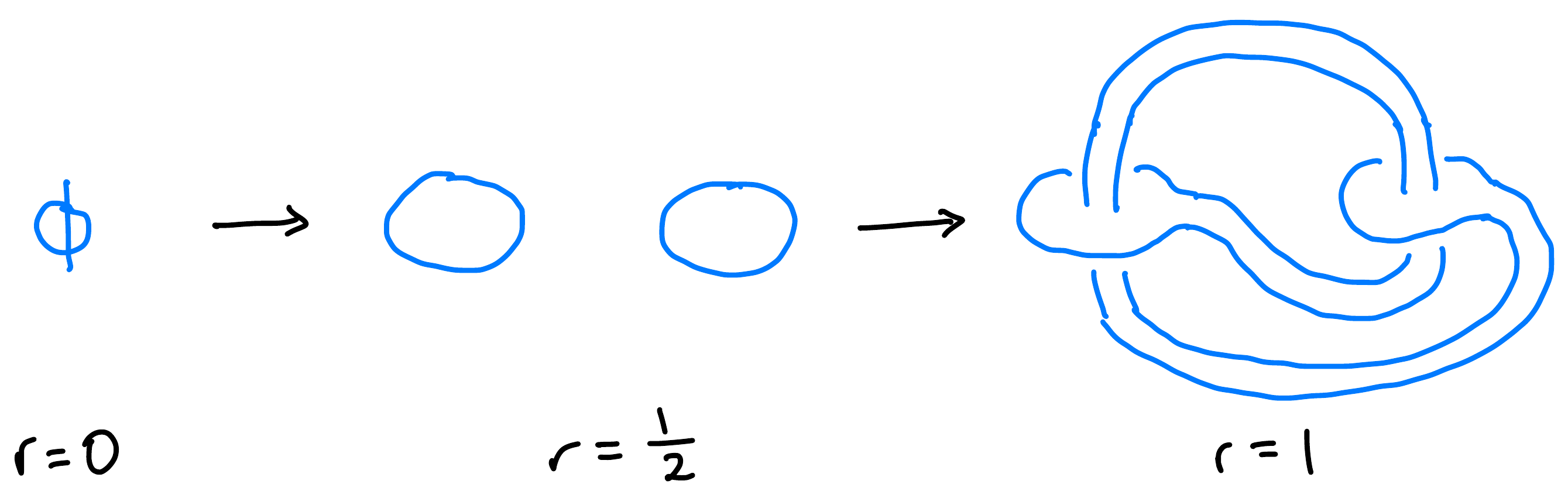} 
\end{center} 
What are the steps in this sequence?  From $r=0$ to $r=1/2$, we have passed two minima of $r|_F$, resulting in a two component unlink.  In general a minimum gives rise to an extra unknot component in the next frame of the movie.  Correspondingly, if we pass a maximum, then an unknot, which is contained in a 3-ball disjoint from the rest of the link, disappears.
What happens between $r=1/2$ and $r=1$?  On the level of diagrams, we perform a \emph{band move}: that is to say, we find an embedded rectangle in $S^3$ which intersects the link shown in the $r=1/2$ picture on two opposite sides, and then we erase those two sides of the rectangle and replace them by the other two sides.  In terms of Morse theory, we pass a saddle point.  To see this it may help to look at the following picture which might occur at $r=0.9$: an isotopy has occurred between $r=1/2$ and $r=0.9$, and we can now see that the $r=1$ frame results from this by passing a saddle point.
\begin{center}
\includegraphics[width=5cm]{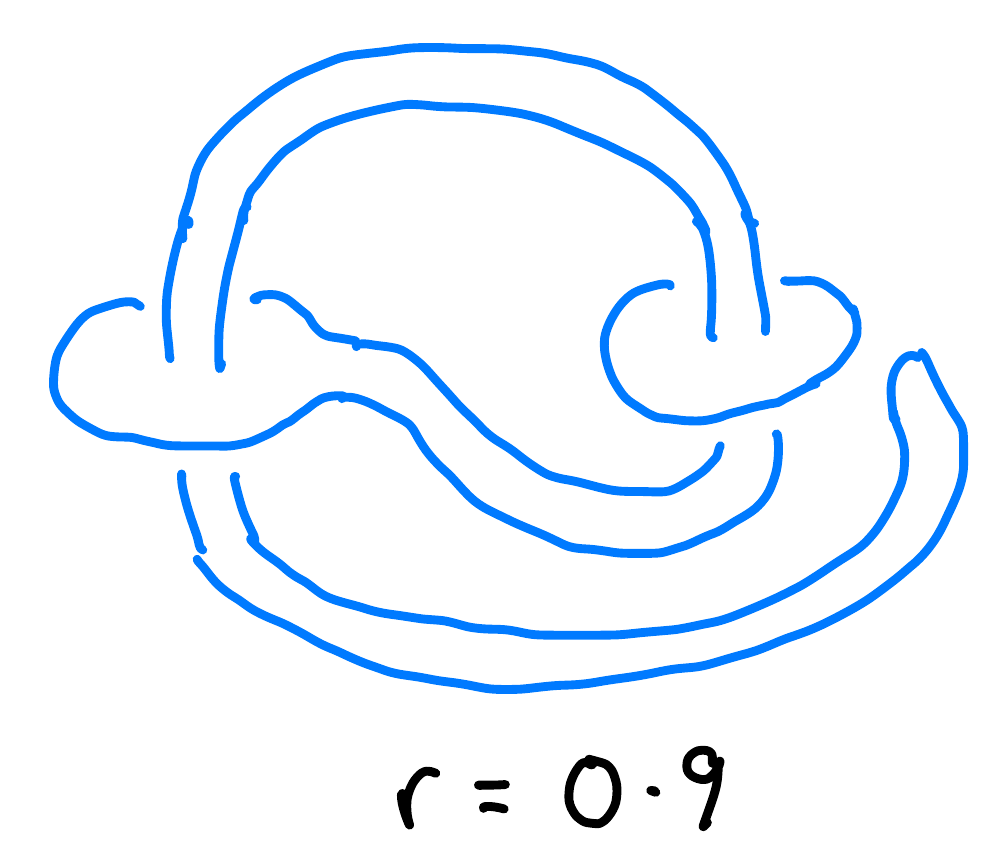} 
\end{center} 

An embedded surface $F$ in $B^4$ is called \emph{ribbon} if the radial distance function restricts to be Morse with no maxima.  From the preceding example, we see that this gives rise to a ribbon-immersed surface in $S^3$, and that by pushing the interior into the 4-ball to resolve the singularities we may recover $F$.

As usual one can turn such handlebody decompositions upside down.  In this context this simply means we run the movie backwards.  Now maxima  result in adding split disjoint unknots to our movie frame, and minima correspond to removing them.  (The split union of two links  in $S^3$ is the link which contains the given links in disjoint 3-balls.)  Saddles still correspond to band moves: the operation of replacing two sides of an embedded rectangle with the other two sides is its own inverse.  Also note that one may push minima deep into the 4-ball and maxima close to the surface, so that the corresponding movie events are sorted with minima followed by saddles followed by maxima, or the reverse.  Thus every smooth compact surface in $B^4$ bounded by a given link $L$ is given by a movie, which in turn is given by a finite set of band moves applied to a split union of $L$ with an unlink, which results in another unlink.  Here the first unlink corresponds to maxima and the second to minima.

We say a knot is \emph{ribbon} if it bounds a ribbon disk in $B^4$, or equivalently if it bounds a ribbon-immersed disk in $S^3$.  Since the Euler characteristic of a disk is one, being ribbon is equivalent to the existence of $k$ band moves applied to the knot which convert it to the $(k+1)$-component unlink.  More generally, a knot $K$ is (smoothly) slice if and only if there exists a set of $k$ band moves applied to the split union of $K$ and an $l$-component unlink, converting it to the $(k-l+1)$-component unlink.  A longstanding question of Fox asks if every slice knot is ribbon \cite{fox}.

\subsection*{Exotic $\rr^4$ from Khovanov homology} From the work of many mathematicians, including Rad\'{o},  
Moise, and Stallings, we know that in any dimension but 4, there is a unique smooth structure on $\rr^n$ \cite{moise,rado,stallings}.  It is an amazing fact that $\rr^4$ admits more than one smooth structure.  In fact it follows from combining deep work of Donaldson, Freedman, Gompf, and Taubes \cite{taubes} that in fact $\rr^4$ admits uncountably many smooth structures!  This requires gauge theory and in particular a great deal of analysis.  We will give a brief sketch here of a gauge-theory-free proof due to Rasmussen, following a suggestion of Gompf, in which the existence of an exotic smooth structure on $\rr^4$  follows from the existence of a knot which is topologically slice but \emph{not} smoothly slice.  Examples are the $(-3,5,7)$ pretzel knot, shown below, and the untwisted Whitehead double of the trefoil \cite[Figure 6.13]{GS}.  In both cases topological sliceness follows from a deep theorem of Freedman \cite{freedman}, which tells us that knots with trivial Alexander polynomial are topologically slice, while smooth nonsliceness follows from a computation of Rasmussen's $s$-invariant, defined using Khovanov homology \cite{ras}.
\begin{center}
\includegraphics[width=8cm]{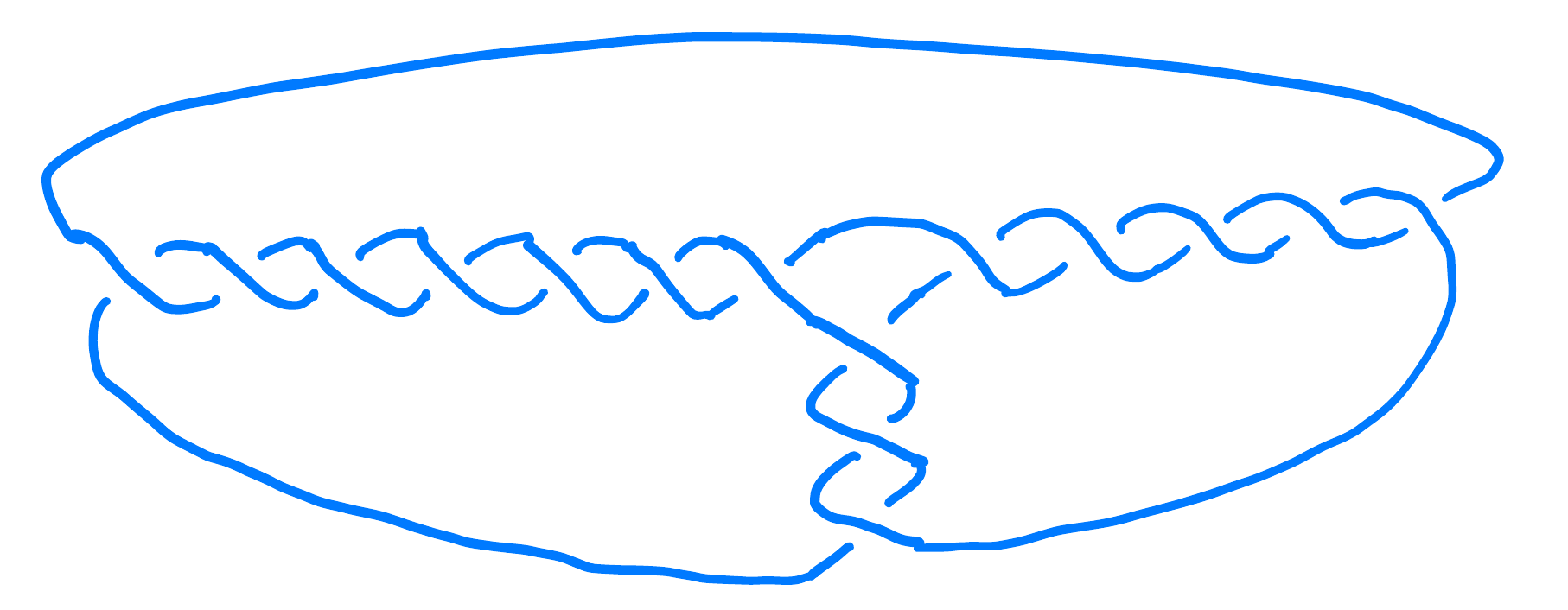} 
\end{center} 

Given a knot $K$ in $S^3$ we obtain a 4-manifold with boundary called the \emph{knot trace} $X_K$; this is obtained by gluing a 2-handle to the 4-ball along $K$.  To be slightly more precise, we have
$$X_K=D^4\cup (D^2\times D^2),$$
where we quotient by a gluing map defined on $\partial D^2\times D^2$ which takes $\partial D^2\times 0$ to $K$, and $\partial D^2\times 1$ to a 0-framing, or if you prefer to a push-off or longitude of $K$ which is nullhomologous in $S^3\setminus K$, or equivalently which lies on a Seifert surface for $K$.  Cartoon below:
\begin{center}
\includegraphics[width=8cm]{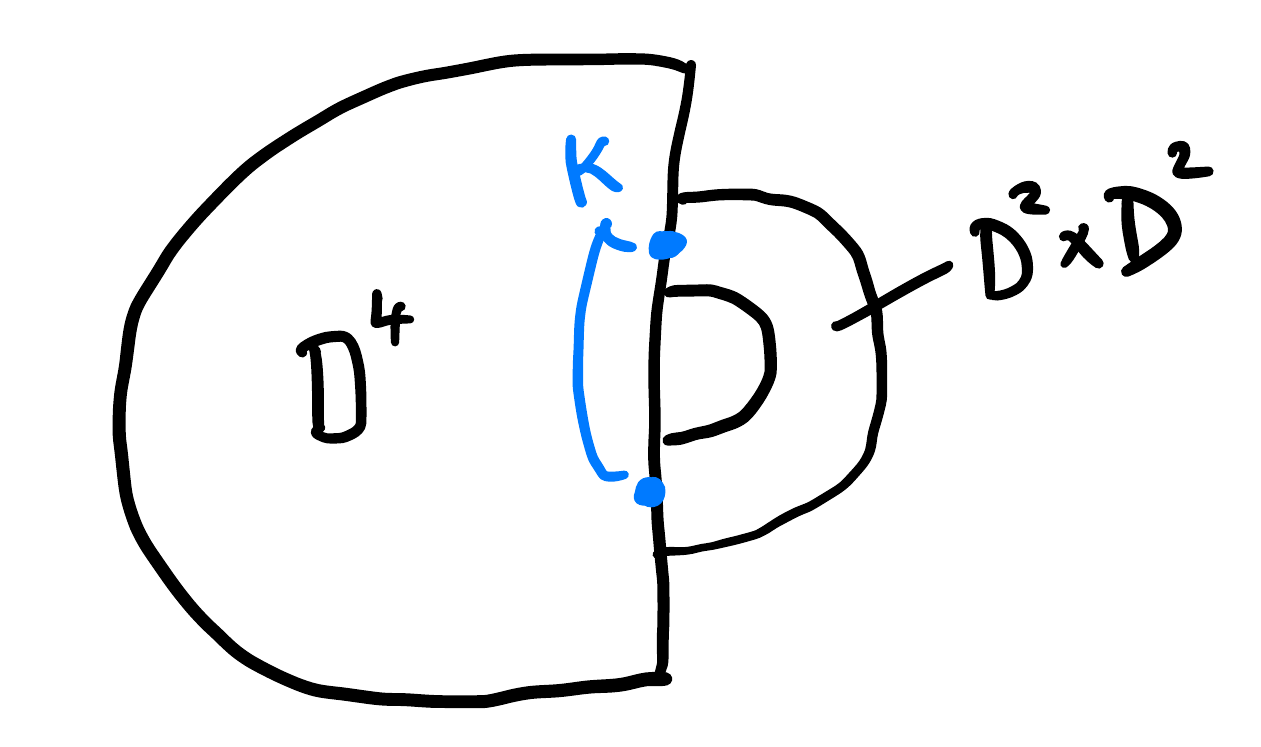} 
\end{center} 

Then we have the following lemma, which holds in either the smooth or topological category:
\begin{lemma}[Kirby-Melvin]
\label{lem:KM}
$K$ is slice $\iff X_K\hookrightarrow S^4\iff X_K\hookrightarrow \rr^4$.
\end{lemma}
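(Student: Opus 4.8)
The plan is to treat the two equivalences separately, and to dispose of $X_K\hookrightarrow S^4\iff X_K\hookrightarrow\rr^4$ first, since it is essentially formal. As $\rr^4\cong S^4\setminus\{p\}$ for any point $p$, any embedding of $X_K$ into $\rr^4$ composes with the inclusion $\rr^4\hookrightarrow S^4$ to give an embedding into $S^4$. Conversely $X_K$ is compact with nonempty boundary, so it cannot be all of $S^4$; any embedding $X_K\hookrightarrow S^4$ therefore misses some point $p$, and deleting $p$ places $X_K$ inside $S^4\setminus\{p\}\cong\rr^4$. It then remains to prove $K$ is slice $\iff X_K\hookrightarrow S^4$.

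For the forward direction I would begin from a slice disk. Write $S^4=D^4_-\cup_{S^3}D^4_+$ as two hemispheres glued along the equatorial $S^3$ containing $K$, and let $\Delta\subset D^4_-$ be a properly embedded disk with $\partial\Delta=K$. Since $\Delta$ is contractible its normal bundle is trivial, so a tubular neighbourhood $\nu(\Delta)$ is diffeomorphic to $D^2\times D^2$ with $\Delta=D^2\times 0$; moreover the framing of $K$ induced by $\Delta$ is the $0$-framing, because the pushoff of $K$ along $\Delta$ bounds a parallel copy of $\Delta$ and so is nullhomologous in the complement of $K$. The key observation is then that $D^4_+\cup\nu(\Delta)$ is precisely the result of attaching a $0$-framed $2$-handle $D^2\times D^2$ to $D^4_+$ along $K$, that is, it is $X_K$. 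Since $D^4_+\cup\nu(\Delta)\subset D^4_+\cup D^4_-=S^4$, this exhibits the desired embedding, with complementary region the slice-disk exterior $D^4_-\setminus\nu(\Delta)$.

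For the reverse direction I would run this construction backwards. Given $X_K=D^4\cup(D^2\times D^2)\hookrightarrow S^4$, let $B$ denote the $0$-handle $D^4$ and let $\mathcal D_0=D^2\times 0$ be the core of the $2$-handle. Then $\mathcal D_0$ is a properly embedded disk in the complementary region $V:=S^4\setminus\operatorname{int}(B)$, with $\partial\mathcal D_0=K\subset\partial B=\partial V=S^3$. A Mayer--Vietoris and van Kampen computation for $S^4=B\cup_{S^3}V$ shows that $V$ is a simply connected homology ball with boundary $S^3$, i.e. a homotopy $4$-ball, so $K$ bounds a disk in a homotopy $4$-ball. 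The main obstacle is exactly to upgrade this to a genuine slice disk in the standard $D^4$: one wants $V\cong D^4$, which is the content of the $4$-dimensional Schoenflies problem. In the topological category this is supplied by Freedman's theorem, so $V\cong_{\mathrm{TOP}}D^4$ and $K$ is topologically slice; in the smooth category smooth Schoenflies is open, and the honest conclusion is that $K$ is smoothly slice in a homotopy $4$-ball, which is the standard reading of ``slice'' in this setting. I expect this Schoenflies point, rather than the handle bookkeeping, to be the only genuinely delicate step, and would flag it explicitly rather than conceal it.
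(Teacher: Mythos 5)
Your overall architecture (the compactness argument for $S^4\iff\rr^4$, the tubular-neighbourhood-of-the-slice-disk construction for the forward direction, and passing to the complement of the $0$-handle with the core of the $2$-handle as candidate slice disk for the converse) is exactly the paper's construction. But there is a genuine error in how you finish the converse, and it is precisely the step the paper's proof is designed to handle. You claim that identifying $V=S^4\setminus\operatorname{int}(B)$ with $D^4$ "is the content of the $4$-dimensional Schoenflies problem," and you therefore weaken the smooth conclusion to "slice in a homotopy $4$-ball." This is not correct: $B$ is not merely a region bounded by an embedded $S^3$; it is a \emph{smoothly embedded closed $4$-ball}. By the disk theorem of Palais and Cerf, any two orientation-preserving smooth embeddings of $D^n$ into a connected oriented $n$-manifold are ambiently isotopic, so $B$ may be isotoped to a standard hemisphere and $V$ is genuinely diffeomorphic to $D^4$ in the smooth category. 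The open smooth Schoenflies problem concerns an embedded $S^3\subset S^4$ that is not known a priori to bound an embedded ball on either side; that situation does not arise here, because the embedding of $X_K$ hands you the ball $B$ itself. This is exactly what the paper's proof emphasizes: it takes a small round ball neighbourhood of the centre of the $0$-handle, observes that this ball is \emph{standard}, and concludes that its complement in $S^4$ is a ball in which the slice disk is visible (the core of the $2$-handle extended by a radial annulus through the collar $D^4\setminus\operatorname{int}(B)\cong S^3\times I$). Shrinking to the small ball also sidesteps any fuss about corners where the handle is attached, which is a second (minor) reason not to use the $0$-handle itself as you do.

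As written, then, your proof establishes the lemma only in the topological category (and even there, Brown's generalized Schoenflies theorem for collared spheres suffices; Freedman is not needed), while the lemma is asserted --- and is true --- in both categories. The fix is to delete the appeal to Schoenflies entirely and replace it with the uniqueness of smoothly embedded closed balls up to ambient isotopy.
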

A literal sketch of the proof is as follows:
\begin{center}
\includegraphics[width=12cm]{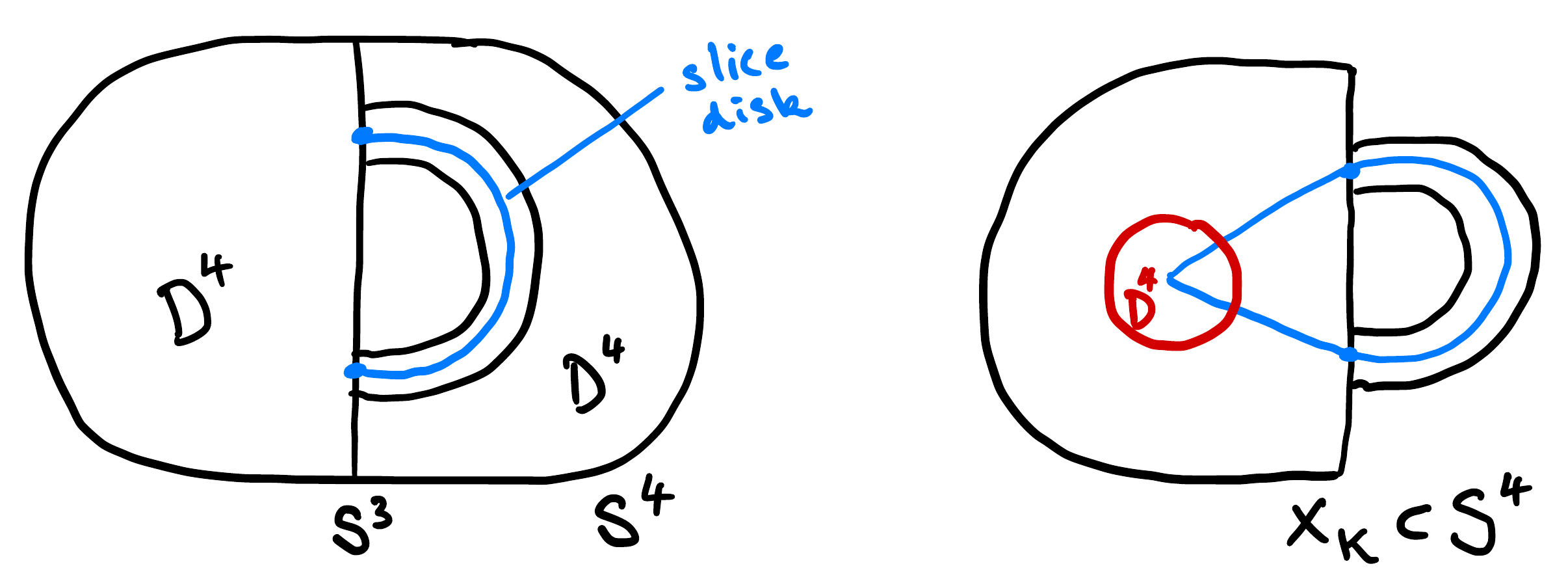} 
\end{center} 
The cartoon on the left gives the forward direction, and the converse comes from the sketch on the right.  The smaller $D^4$ inside $X_K$ is standard (it is a ball neighbourhood of the centre of $D^4\subset X_K$) so its complement in $S^4$ is a ball in which we see a slice disk.  For details see Kirby-Melvin or Miller-Piccirillo \cite{kirmel,millerpic}.

Finally we take a knot $K$ as above which is topologically but not smoothly slice.  Thus $X_K$ embeds topologically in $\rr^4$.  Freedman-Quinn tell us that $\rr^4\setminus X_K$ admits a smooth structure, and this can be extended over all of $\rr^4$, with $X_K$ as a smooth submanifold \cite{FQ}.  Since Kirby-Melvin tell us that $X_K$ does not embed smoothly in standard $\rr^4$, this smooth structure must be exotic.

As a final note about the above, we must mention the proof by Piccirillo that the Conway knot is not slice, which made a similar use of \Cref{lem:KM} and Rasmussen's invariant \cite{hom2,pic}.  She exhibited a knot $K$ with nonzero Rasmussen invariant which is therefore not slice, and showed that the knot trace of $K$ is diffeomorphic to that of the Conway knot.

\subsection*{The concordance group of knots}
The set of oriented knots in $S^3$ is an abelian monoid with the operation given by connected sum.  This can be realised diagrammatically as shown below, or one can excise two standard pairs $(D^3,D^1)$ from each of $(S^3,K_1)$ and $(S^3, K_2)$ and glue the complements together matching orientations.
\begin{center}
\includegraphics[width=12cm]{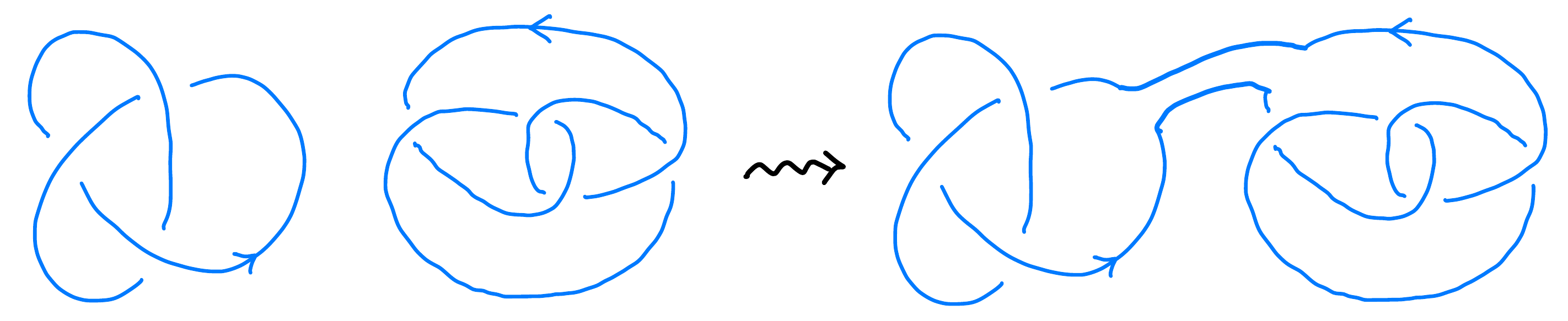} 
\end{center} 

Sadly there are no inverses for this operation at the level of knots and isotopy: knot genus is additive by \cite{schub} (or see \cite[Theorem 2.4]{lick}), so the only way to get an unknot by connected sum is if each summand is the unknot.

An isotopy of knots $K\simeq K'$ is equivalent to an embedding 
$$S^1\times I \hookrightarrow S^3\times I$$
which gives $K$ and $K'$ on the boundary, and which is the identity on the interval summand.  We can remove this restriction and allow any smooth (or locally flat) proper embedding of the annulus into $S^3\times I$ which cobounds $K$ and $K'$.  Such an embedded annulus is called a \emph{concordance} between $K$ and $K'$.  It is quite straightforward to see that this gives an equivalence relation on the set of oriented knots, which we will write $K\sim K'$.

It is also straightforward to see that a knot $K$ is slice if and only if $K$ is concordant to the unknot.  More generally, $K\sim K'$ if and only if $K\#-K'$ is slice, where $-K$ denotes the mirror reverse of the oriented knot $K$.  Recall that the mirror of a knot in $S^3$ is its image under reflection in a plane; thus one obtains a diagram for $-K$ from one for $K$ by changing all crossings and reversing the orientation.  If $K\sim K'$, then drilling out a regular neighbourhood of the image in $S^3\times I$ of an arc $\{1\}\times I$ from the concordance $S^1\times I$ results in a slice disk in $D^4$ bounded by $K\#-K'$.  Conversely, if $K\#-K'$ is slice then we can add a $(3,1)$ handle pair to $(D^4,K\#-K')$ to obtain a concordance.  Breaking that down: a slice disk for $K\#-K'$ gives rise to an annulus in $D^4$ bounded by the disjoint union of $K$ and $-K'$ by applying a band move to undo the connected sum.  We then attach a 3-handle to the 4-ball along a 2-sphere separating $K$ and $-K'$ to obtain the required embedded annulus in $S^3\times I$.

One then has that $\calc=\{\mbox{oriented knots}\}/\!\sim$ is a group with operation given by connected sum, and with the inverse of $K$ represented by $-K$.  There are two versions of this, depending on whether the concordances are smoothly or locally flatly embedded.  For further reading on knot concordance, please see for example \cite{hom,liv,liv2}.

One can also consider concordance of links, with an equivalence relation given by embeddings of disjoint unions of annuli in $S^3\times I$.  This does not give rise to a group in the same way for links of more than one component.  For various approaches to defining concordance groups of links, see \cite{do,heddenkuzbary,hosokawa}.

{\bf Basic problem:} how to decide if a given knot $K$ is or is not slice?

Showing $K$ is slice:
\begin{itemize}
\item exhibit a slice or ribbon disk as described above
\item exhibit a handle decomposition of the 4-ball involving $0$-, $1$-, and $2$-handles.  Then the boundaries of the cocores of the 2-handles are slice knots in $S^3$, and one can apply Kirby calculus to obtain diagrams of them in the standard picture of $S^3$.  This method has been successfully used by Gompf-Scharlemann-Thompson and also Abe-Tange \cite{AT,GST}.
\end{itemize}

To show $K$ is nonslice, there are a plethora of obstructions, some of which obstruct topological sliceness and some just smooth sliceness.  These include
\begin{itemize}
\item determinant (square for slice knots)
\item signature and Levine-Tristram signatures
\item Fox-Milnor condition on the Alexander polynomial
\item Herald-Kirk-Livingston condition on twisted Alexander polynomials
\item Heegaard Floer concordance invariants
\item Rasmussen's invariant and other invariants coming from variants and refinements of Khovanov homology.
\end{itemize}

Some sources for the above include \cite{hkl,hom,LL1,LL2,LS,liv2,ras}.

Some notable progress in recent years includes Lisca's classification of slice two-bridge knots, together with his verification that all such are ribbon \cite{lisca}.  The obstruction used by Lisca is an application of Donaldson's diagonalisation theorem, and double branched covers of the 4-ball; we will look at this in more detail in Lecture 3.  Subsequently, Lecuona applied Lisca's method to prove that slice implies ribbon for a family of alternating Montesinos knots \cite{lecuona}.

{\bf Sample problems:}
\begin{itemize}
\item Classify slice knots in some chosen family of knots
\item Classify ribbon knots in some chosen family of knots
\item Prove slice implies ribbon for some chosen family of knots
\item Classify slice disks (or ribbon disks, or ribbon disks with up to a certain number of critical points) for some chosen knot.
\end{itemize}

The following is the most basic example of  the last question above.
\begin{question}
\label{q:unknot}
Is there only one isotopy class of ribbon disk for the unknot?
\end{question}
A positive answer  for ribbon disks with at most three critical points follows from theorems of Gabai and Scharlemann \cite{gabai,schar}.  These show that the only way to apply a band move to the unknot and obtain the $2$-component unlink is the obvious planar diagram, which gives rise to the standard disk pushed in from $S^3$.

For other work on classification of slice disks, see \cite{CP,JZ,MZ}.

Our focus for the rest of these lectures will be on alternating knots, and whether we can determine if they are slice.  The powerful obstruction from Donaldson's diagonalisation theorem and double branched covers used by Lisca extends to this case, as we will describe.  We will see that in addition to giving us an obstruction, Donaldson's theorem actually helps us to find ribbon disks.


\section{Lecture 2}
The main goal of this lecture is to look at double branched covers of knots and links in the 3-sphere and surfaces in the 4-ball.  We begin with a very brief introduction to Morse theory and Kirby calculus, which enables us to draw pictures of smooth 3- and 4-dimensional manifolds.  For more details we refer the reader to \cite{akbulut,GS,milnorMT}.

Let $M^n$ be a smooth compact manifold, possibly with boundary.  We may choose a Morse function
$$f:M\to \rr.$$
This has the property that at each critical point $p$ of $f$, there exist local coordinates in which
$$f=-x_1^2-\dots-x_k^2+x_{k+1}^2+\dots+x_n^2.$$
We say that the index $i(p)$ of $p$ is $k$.  This leads to a \emph{handlebody decomposition} of $M$ as follows.   We consider sublevel sets $M_b=f^{-1}(-\infty,b]$.  These are the same up to diffeomorphism for nearby regular values of $b$, and change when the level $b$ passes a critical point of index $k$ by attachment of a $k$-handle, which is $D^k\times D^{n-k}$ attached to $M_b$ along $\partial D^k\times D^{n-k}$.  This is illustrated for the most standard example (the height function on a torus) below.
\begin{center}
\includegraphics[width=12cm]{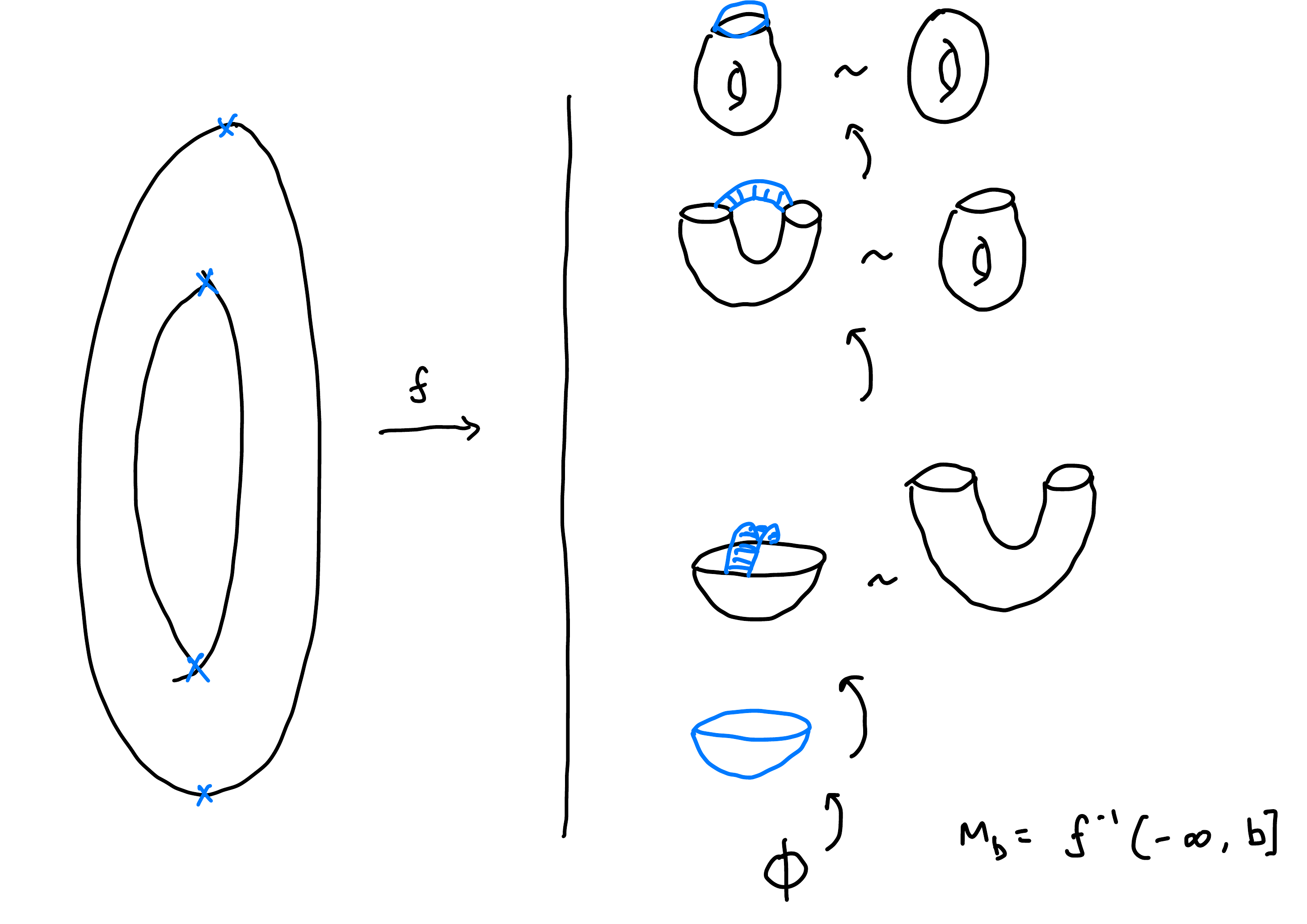} 
\end{center} 
There are 4 critical points, of index 0, 1, 1, and 2.  The first sublevel set is the empty set, to which we attach a 0-handle $D^2$ when we pass the minimum of the height function $f$.  Then each time we pass an index one critical point we attach a band $D^1\times D^1$ along two opposite faces of the rectangle.  Finally we pass a maximum and attach a 2-handle, which in this dimension is just a 2-disk attached along its boundary circle.

A huge benefit of this description for low-dimensional topologists is that it enables us to draw $(n-1)$-dimensional pictures of $n$-dimensional manifolds.  Let's see this for the above example.  There is a single minimum point, so there is a single $0$-handle.  The boundary of the $0$-handle is a copy of $S^1$, and this is where we draw our diagram of the torus.  We just need to specify how the two 1-handles are attached; we will then have built a surface with a single boundary component, and there is then just one manifold that we can obtain by attaching a 2-handle, so we don't need to specify any more information except that there is a 2-handle to be attached.  This results in the following picture of the torus.
\begin{center}
\includegraphics[width=\textwidth]{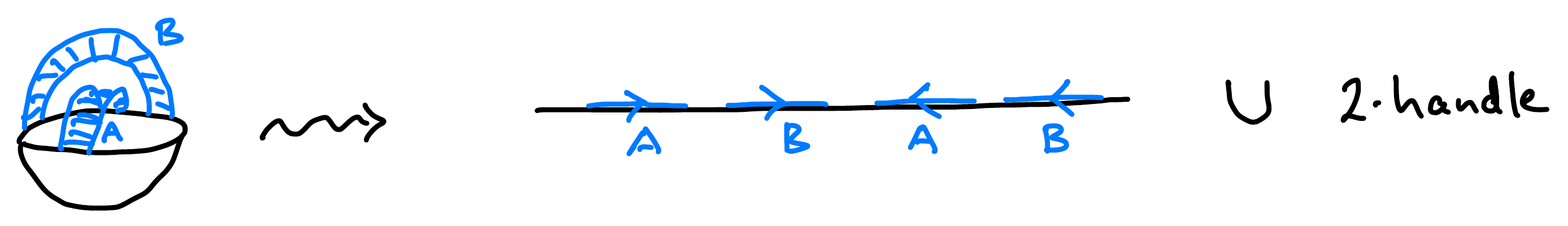} 
\end{center} 
In a similar way we can obtain 3-dimensional diagrams for 4-manifolds, which is of great benefit for those of us who have difficulty seeing in 4 dimensions!  These are commonly referred to as Kirby diagrams.  Suppose that $M$ is a compact connected oriented 4-manifold.  A suitable Morse function gives rise to a handle decomposition
$$M^4=0\mbox{-handle}\cup\{1\mbox{-handles}\}\cup\{2\mbox{-handles}\}\cup\{3\mbox{-handles}\}(\cup4\mbox{-handle}?),$$
where there is a 4-handle attached if the manifold is closed.  We draw the attaching regions in $\partial(0\mbox{-handle})=S^3$.  The attaching region for a 1-handle is $\partial D^1\times D^3$, which is a pair of disjoint 3-balls.  The attaching region for a 2-handle is $\partial D^2\times D^2$ which is a solid torus.  We specify this by giving a knot in $S^3$ and a framing coefficient which is an integer; the $0$-framing of a knot in $S^3$ is equal to a push-off on its Seifert surface, or in other words a push-off of the knot which is nullhomologous in the knot complement.  More generally the $m$-framing is a push-off of the knot which has linking number $m$ with the knot.  Here are some examples.
\begin{center}
\includegraphics[width=\textwidth]{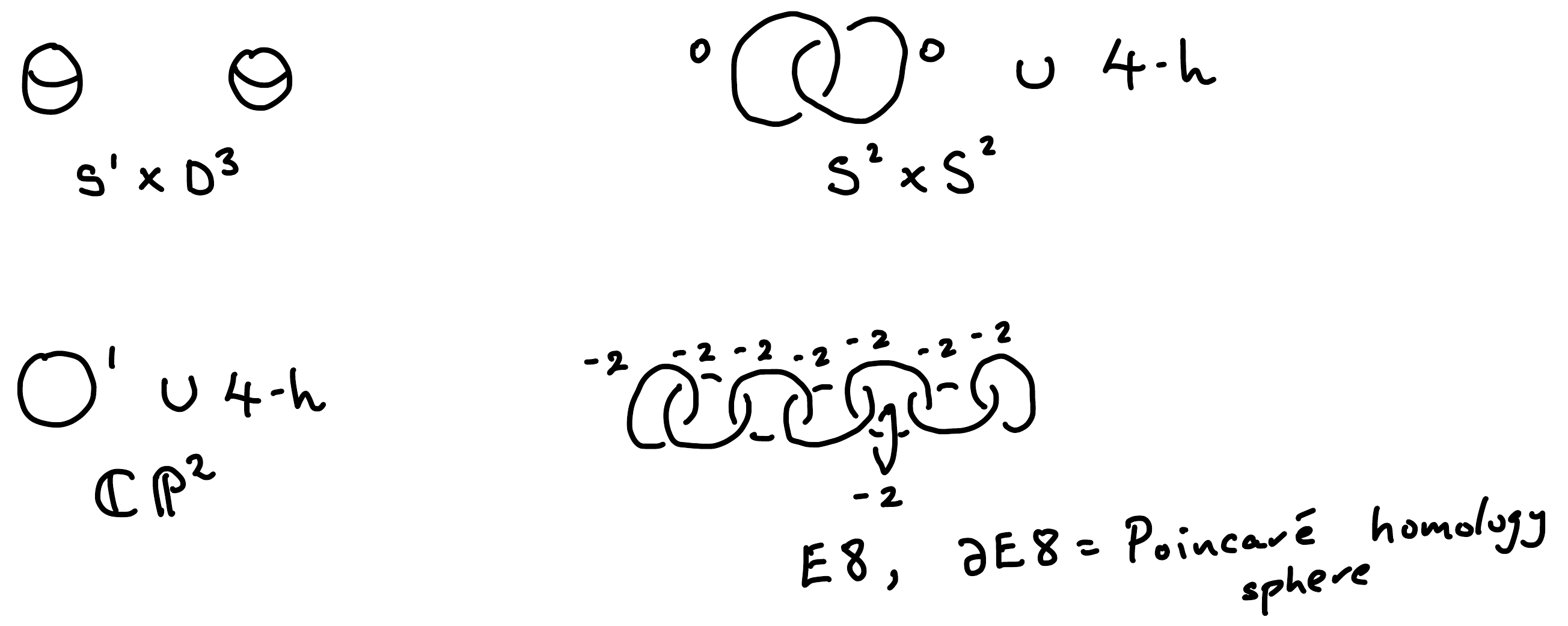} 
\end{center} 
A result of Laudenbach and Po\'{e}naru \cite{LP} tells us that any self-diffeomorphism of a connected sum of copies of $S^1 \times S^2$ extends over the boundary sum of copies of $S^1\times D^3$.  This means that for a closed 4-manifold, we do not need to specify how the 3- and 4-handles are attached.  However, for a 4-manifold with boundary,  it is necessary to specify the embedded 2-spheres along which we attach any 3-handles.

There is a useful alternative way to draw 1-handles in Kirby diagrams, namely as dotted circles: one draws an unknot with a dot along it which is a meridian of an arc in the boundary of the 0-handle connecting the two balls which are the attaching region of the 1-handle.  Attaching circles of 2-handles which go over the 1-handle are instead drawn as passing through the dotted circle.  The following local picture gives a good idea of how to translate between these two methods of drawing Kirby diagrams.
\begin{center}
\includegraphics[width=6cm]{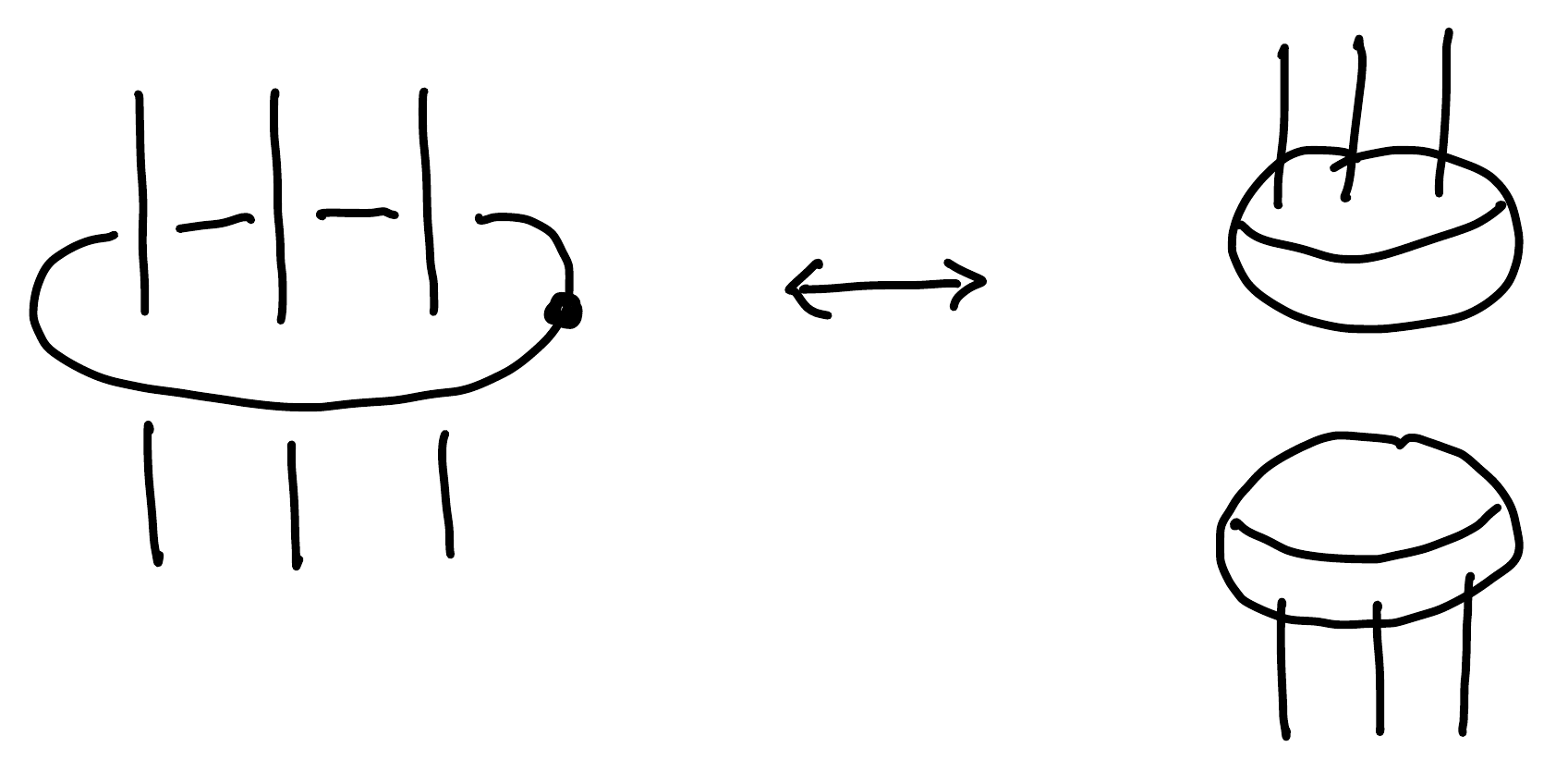} 
\end{center} 

It is straightforward to calculate the homology groups and the intersection pairing $Q_M$ on the second homology (Poincar\'{e} dual to the cup product pairing) from a Kirby diagram of $M$.  Kirby calculus is the application of certain moves such as handleslides which are known to preserve $M$ and/or its boundary 3-manifold.  For more details on Kirby diagrams, see \cite{akbulut,GS}.

There are two common and useful approaches for drawing pictures of 3-manifolds.  One is to appeal to a theorem of Rohlin \cite{rohlin} which tells us that every (closed oriented) 3-manifold is the boundary of a smooth 4-manifold, and we can just draw a Kirby diagram of such a bounding 4-manifold.  Another is to take a 3-dimensional handle decomposition and draw our picture in the oriented surface $\Sigma$ which is the boundary of the handlebody given by the union of the $0$- and $1$-handles, which is connected if there is a single $0$-handle.  We then draw two sets of circles on $\Sigma$: one set, commonly drawn in blue, which shows where to attach the $2$-handles, and one set in red which if cut along would undo the attachment of the $1$-handles.  These are called Heegaard diagrams.  Here are some simple examples.
\begin{center}
\includegraphics[width=\textwidth]{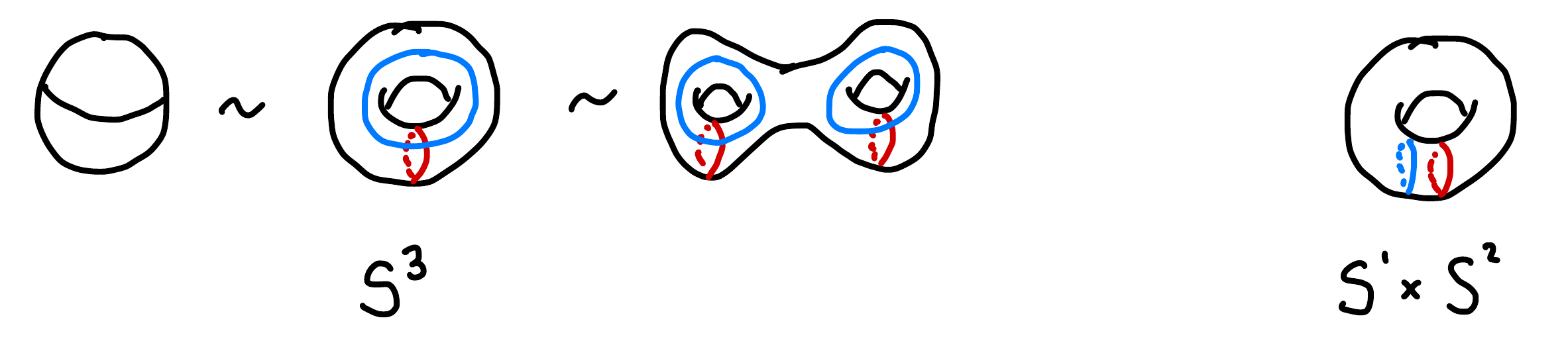} 
\end{center} 

\subsection*{Double branched covers}
Let $L$ be a link of $l$ components in $S^3$.  Denote a regular neighbourhood of $L$ by $\nu(L)\cong L\times D^2$.  Let $X$ be the link complement
$$X=S^3\setminus\overset{\circ}{\nu(L)}.$$
Then $H_1(X)\cong\zz^l$, generated by meridians $\mu_1,\dots,\mu_l$.  Let  $\widetilde{X}\rightarrow X$ be the double cover of $X$ which is nontrivial on each meridian.  The double cover $\Sigma(S^3,L)$ of $S^3$ branched along $L$ is then given as follows.
$$\begin{tikzpicture}[scale=0.5]
\node at (0,0) {$\Sigma(S^3,L)$};
\node at (3.25,0) {$=$};
\node at (6,0) {$\nu(L)$};
\node at (8.25,0) {$\cup$};
\node at (10,0) {$\widetilde{X}$};
\node at (0,-3) {$S^3$};
\node at (3.25,-3) {$=$};
\node at (6,-3) {$\nu(L)$};
\node at (8.25,-3) {$\cup$};
\node at (10,-3) {$X$};
\draw[->] (0,-0.6) to (0,-2.4);
\draw[->] (6,-0.6) to (6,-2.4);
\draw[->] (10,-0.6) to (10,-2.4);
\node at (6.5,-1.5) {\tiny $z^2$};
\end{tikzpicture}$$
In the same way we obtain the double branched cover $\Sigma(D^4,F)$ of a surface $F$, smoothly properly embedded in the 4-ball.  The surface $F$ does not have to be connected or orientable, and it may contain closed components.  We would like to learn to draw Kirby diagrams for these; this will be described in more detail in ongoing work of the author with Sa\v{s}o Strle \cite{GLslice}, and other accounts are given in \cite{akbulut, AK, GS}.  We follow a method due to Akbulut-Kirby, following a construction of Kauffman, also used by Gordon-Litherland \cite{AK,GL,kauf}.  This applies to the special case of branch locus $F^{n-2}$ in $D^n$ arising from an embedded submanifold in $\partial D^n$ by pushing the interior of the submanifold into the interior of the $n$-ball.

We cut open the ball along the trace of the homotopy that pushed the interior of the submanifold into the interior of $D^n$ to obtain the properly-embedded $F$.  This results in a copy of $D^n$ again, with a regular neighbourhood $N$ of $F$, which is the total space of an $I$-bundle over $F$, in its boundary.  This cut-open $D^n$ has corners, as illustrated below in the case $n=2$, with $F$ a single point, and $N$ drawn in blue.
\begin{center}
\includegraphics[width=9cm]{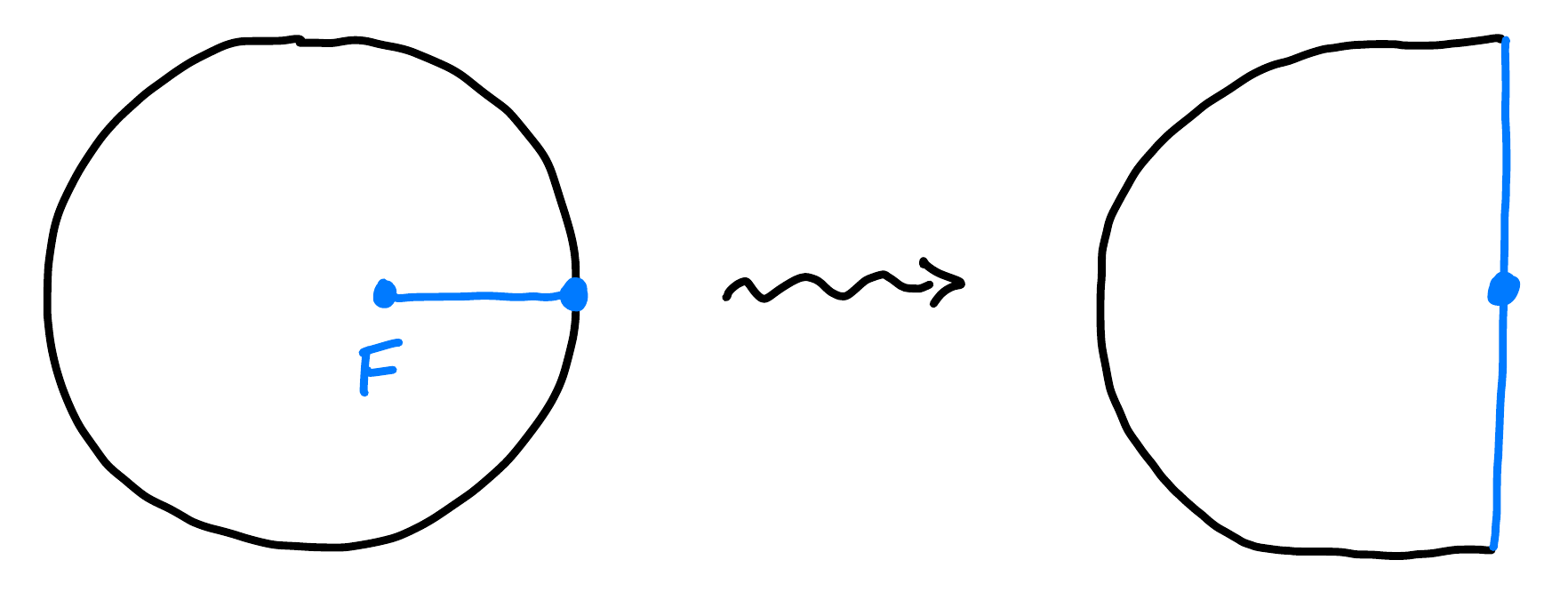} 
\end{center} 
The pair $(D^n,F)$ may be recovered from this cut-open ball by gluing $N$ to itself using the involution $\tau$ given by reflection on the fibres.  We obtain the double cover $\Sigma(D^n,F)$ of $D^n$ branched along $F$ by taking two copies of the cut-open $D^2$ and gluing them to each other via the same involution $\tau$.
\begin{center}
\includegraphics[width=5cm]{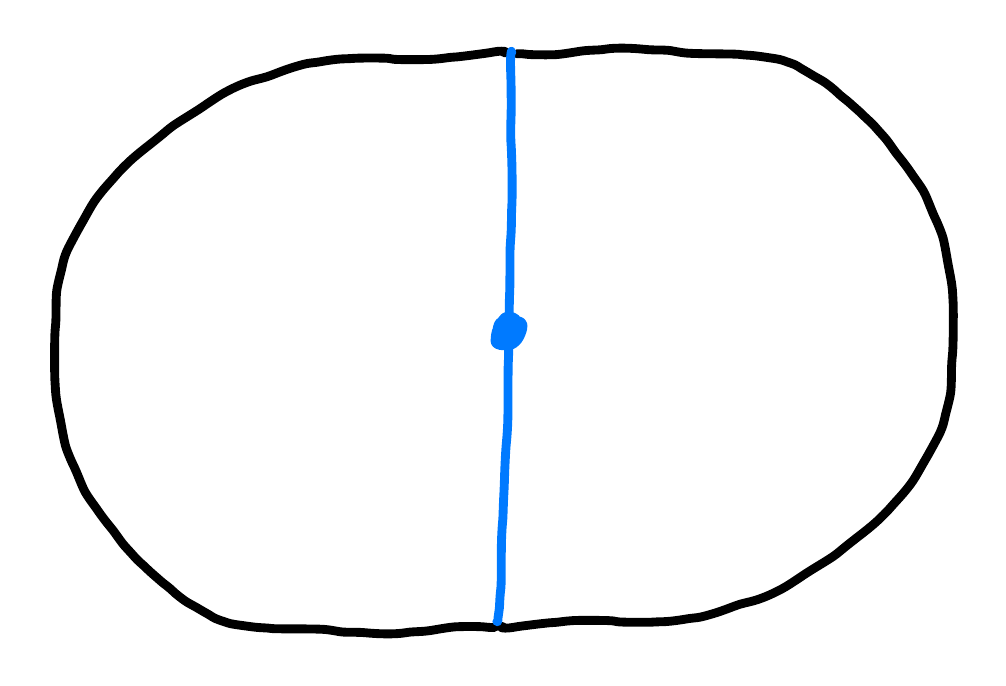} 
\end{center} 
We also note for later that if one glues the two copies of the cut-open $D^n$ by identifying copies of $N\times I$ instead of just $N$, one obtains the same manifold $\Sigma(D^n,F)$, up to diffeomorphism.  Here is a sketch for the $n=2$ example.
\begin{center}
\includegraphics[width=12cm]{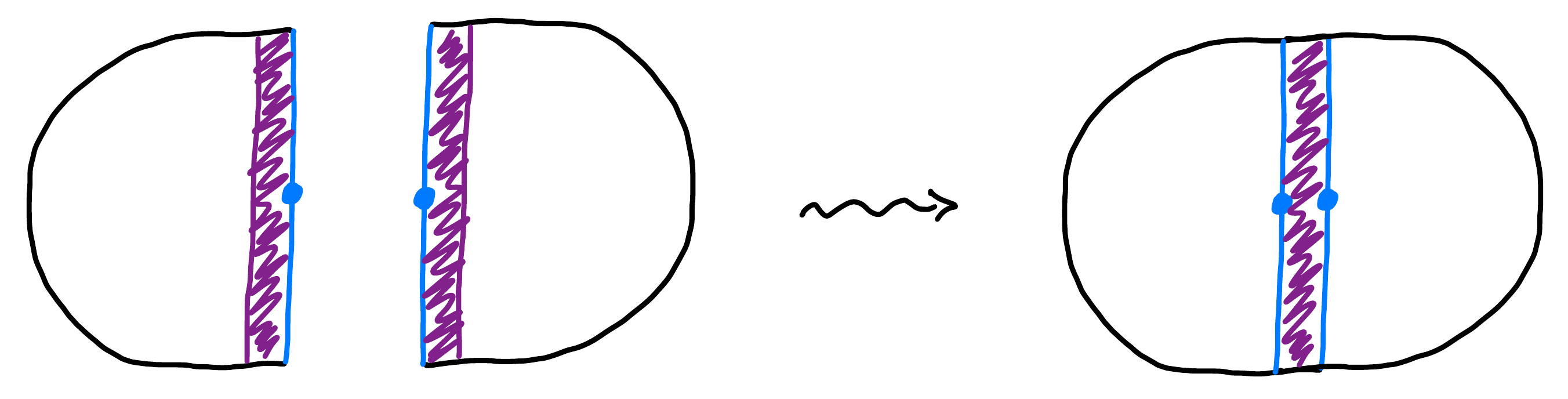} 
\end{center} 

\subsection*{The double branched cover of a link in $S^3$}
As a 4-dimensional example of the above method, we will explain how to derive a Kirby diagram and also a Heegaard diagram for the double branched cover of a link in $S^3$.  This Kirby diagram was first obtained by Ozsv\'{a}th and Szab\'{o}, and the closely related Heegaard diagram is due to Greene \cite{greenedbc,osdbc}; their proofs differed from that given here.  We begin with a diagram of our link $L$; let's choose a diagram of the figure-8 knot for an example, shown in Figure \ref{fig:fig8dbc} along with the resulting Kirby and Heegaard diagrams.

\begin{figure}[htbp]
\centering
\includegraphics[width=14cm]{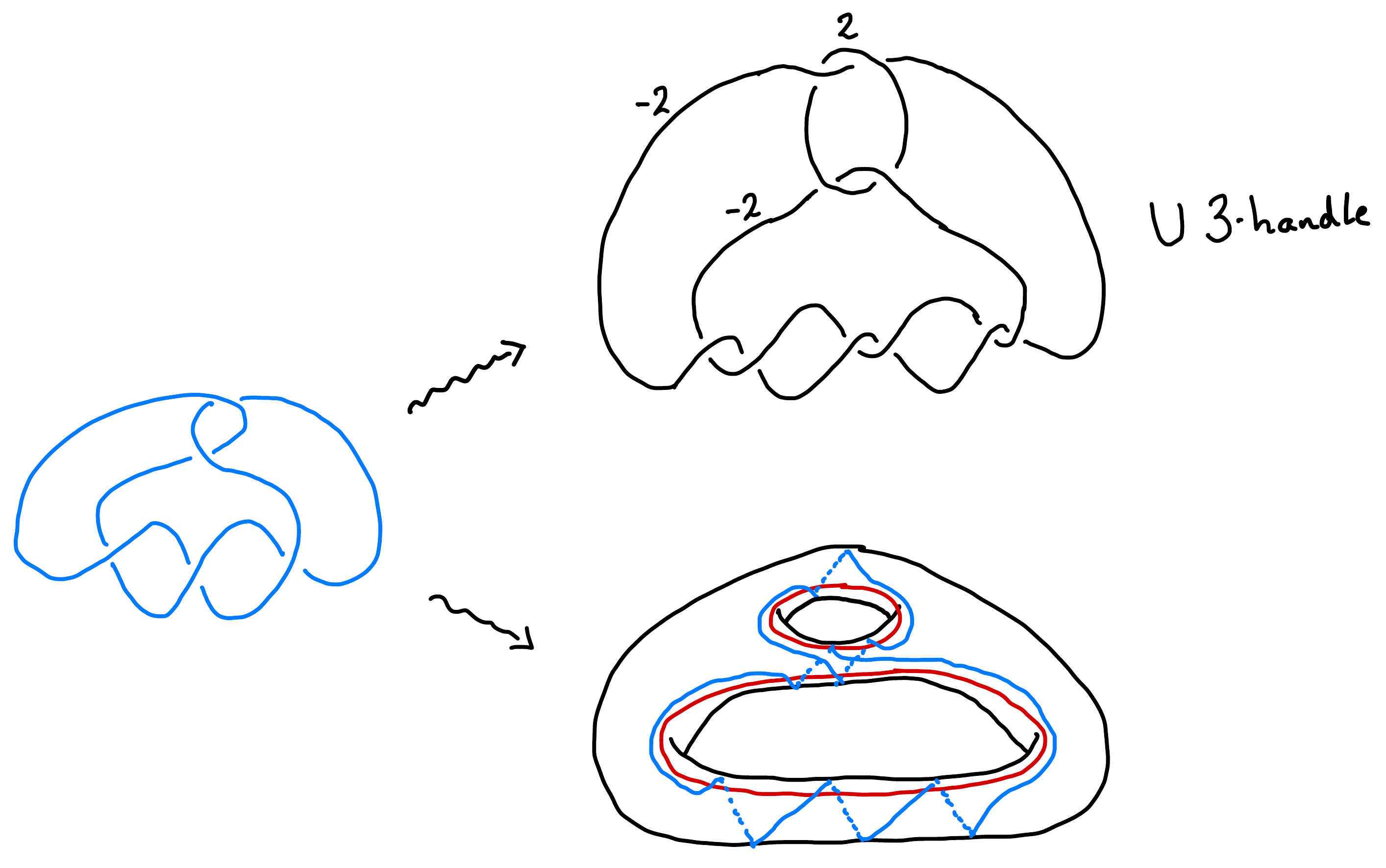} 
\caption
{{\bf Kirby and Heegaard diagrams for the double branched cover of the figure-8.}}
\label{fig:fig8dbc}
\end{figure}

To draw these,  we first choose a \emph{chessboard colouring} of the diagram: we colour the regions of the diagram black or white, so that at each crossing the colours meet like squares on a chessboard.  Here is one of the two choices for our example.
\begin{center}
\includegraphics[width=6cm]{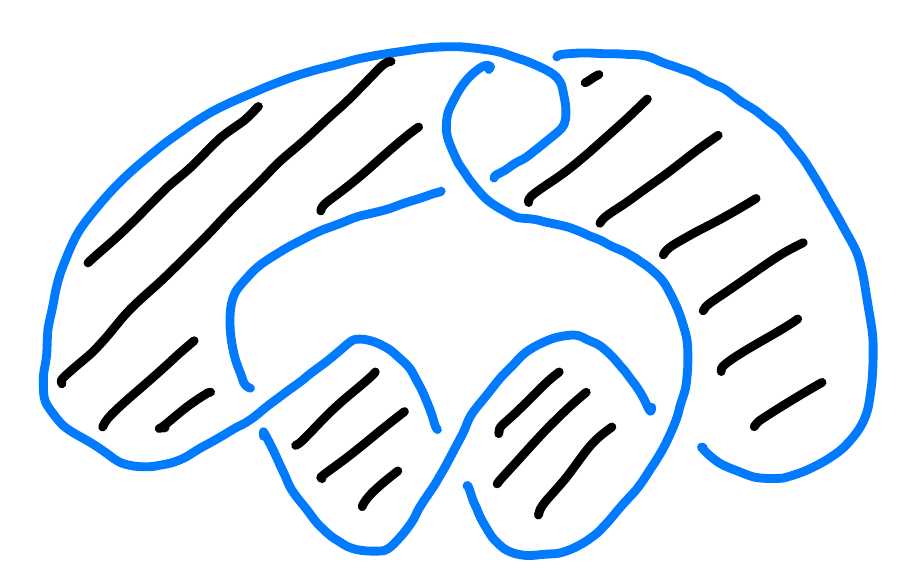} 
\end{center} 
Let $F$ be the resulting black spanning surface embedded in $S^3$.  We obtain the Kirby diagram from the link diagram by copying the diagram with the following substitutions at crossings (each crossing gets replaced by a clasp between white regions).
\begin{center}
\includegraphics[width=10cm]{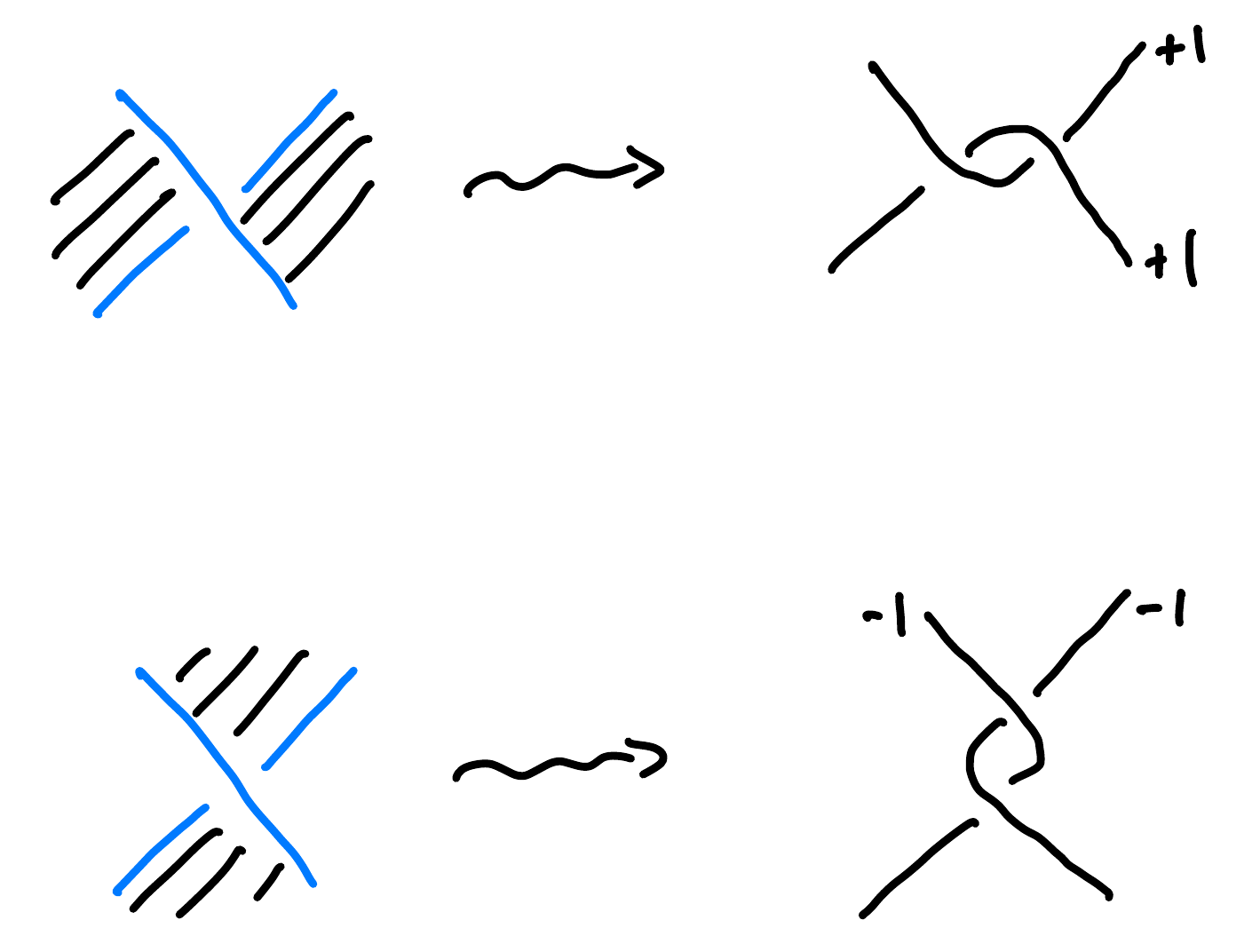} 
\end{center} 
This results in one 2-handle attaching circle for each white region of the diagram.  The framing coefficient of each is given by summing the local contributions ($+1$ or $-1$) from each crossing.  We then add one 3-handle; it turns out that this may be cancelled with any of the 2-handles in the diagram without changing the 4-manifold.  We will see that this is in fact a Kirby diagram for $\Sigma(D^4,F)$, where by an abuse of notation we also use $F$ to refer to the properly embedded surface obtained by pushing the interior of the black surface into the 4-ball.

The Heegaard diagram also makes use of the chessboard colouring and the black surface $F$.  For this we require $F$ to be connected.  Let $N$ again be a regular neighbourhood of $F$ which is the total space of an $I$-bundle over $F$.  The boundary of $N$ is our Heegaard surface $\Sigma$.  The red curves are exactly given by the boundaries of all but one of the white regions of the chessboard-coloured diagram; the omitted region can be chosen arbitrarily.  The blue curves are obtained by taking a parallel copy of the red curves, and then applying a Dehn twist for each crossing in the diagram to the one or two blue curves passing through that part of $\Sigma$ (as in Figure \ref{fig:fig8dbc}).

\begin{proposition} The Kirby diagram described above represents the double cover $\Sigma(D^4,F)$ of the 4-ball branched along the pushed-in black surface.  The 3-handle may be cancelled with any of the 2-handles in the diagram.  The Heegaard diagram represents the double cover $\Sigma(S^3,L)$ of $S^3$ branched along the link $L$.
\end{proposition}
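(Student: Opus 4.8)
The plan is to deduce all three statements from the cut-open doubling description of $\Sigma(D^n,F)$ developed in the previous subsection, specialised to the black spanning surface $F$. Recall that $\Sigma(D^4,F)$ is assembled from two copies $B_+,B_-$ of the cut-open $4$-ball, each a $4$-ball containing the $I$-bundle neighbourhood $N$ of $F$ in its boundary $S^3$, glued along (a collar $N\times I$ of) $N$ by the fibrewise reflection $\tau$. The key observation is that $F$ carries the handle decomposition read off from the chessboard colouring: the black regions are its $0$-handles (disks) and the crossings are its $1$-handles (half-twisted bands), so $N$ is a genus $w-1$ handlebody, where $b,c,w$ count black regions, crossings and white regions and $b+w=c+2$. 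An Euler characteristic count, $\chi(\Sigma(D^4,F))=2\chi(D^4)-\chi(F)=2-(b-c)=w$, then forces a decomposition with a single $0$-handle, $w$ two-handles, one $3$-handle, second Betti number $w-1$, and $\partial\Sigma(D^4,F)=\Sigma(S^3,L)$. I would build this decomposition explicitly and match it to the stated diagrams.

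\textbf{The Kirby diagram.} Take $B_+$ as the unique $0$-handle and reconstruct $B_-$ by attachments governed by the handle structure of $F$. Branching over the pushed-in black disks merely reassembles these into the single $0$-handle and contributes no second homology; the essential features come from the crossings, which are where the two balls get clasped together. The central local computation is that the double branched cover over a single pushed-in half-twisted band is a $2$-handle, and that in the diagram of the cover it appears exactly as the clasp between the two neighbouring white regions in the substitution rule. Carrying this out at every crossing produces one $2$-handle attaching circle per white region, realised as the boundary of that region, clasped with its neighbours at shared crossings; the self-linking forced by each half-twist contributes $\pm1$ to the framing according to the sign of the crossing, giving the advertised sum of local contributions. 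As an independent check, the resulting linking matrix is the degenerate rank-$(w-1)$ Goeritz matrix of the diagram, equivalently the Gordon--Litherland form of $F$, which presents $H_1(\Sigma(S^3,L))$ correctly. The remaining piece from the top of $B_-$ is a single $3$-handle, as required.

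\textbf{The cancelling $3$-handle and the Heegaard diagram.} The $w$ white-region classes satisfy one relation coming from planar duality, since their boundaries sum to zero in $S^2$; this is exactly the one-dimensional kernel $(1,\dots,1)$ of the Goeritz matrix, and it is realised geometrically by the cancelling $3$-handle. I would exhibit the attaching $2$-sphere of the $3$-handle meeting the belt circle of each $2$-handle transversally in one point, so that it cancels any chosen $2$-handle, the symmetry among the white regions making the choice immaterial. For the Heegaard diagram, take $\Sigma=\partial N$, a genus $w-1$ surface (using that $F$ is connected). The handlebody $N$ and the complementary handlebody $\overline{\Sigma(S^3,L)\setminus N}$ form the two sides of the splitting; one checks that the red curves, the boundaries of all but one white region, give a complete meridian system for one side, and that the branch monodromy of the double cover carries these to the meridians of the other side by inserting a single Dehn twist at each crossing. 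This produces exactly the blue curves described. Since $\partial N=\partial\Sigma(D^4,F)$, this diagram presents $\Sigma(S^3,L)$ as the boundary of the Kirby diagram, in agreement with the first part.

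\textbf{Main obstacle.} The heart of the argument, and the step I expect to be most delicate, is the local-to-global analysis at the crossings: proving that branching over one pushed-in half-twisted band is precisely a single $2$-handle yielding the clasp with the correctly signed framing on the $4$-dimensional side, and precisely one Dehn twist of the correct handedness on the $3$-dimensional side. This demands care with orientations, since the $I$-bundle $N$ over the possibly non-orientable $F$ may be twisted, with the sign conventions relating crossing signs to Goeritz signs, and with verifying that the global assembly produces exactly the single relation responsible for the cancelling $3$-handle.
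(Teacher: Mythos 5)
There is a genuine gap in your derivation of the Kirby diagram: you misidentify the mechanism that produces the $2$-handles. Your ``central local computation'' --- that the double branched cover over a single pushed-in half-twisted band is a $2$-handle --- is the key step of a \emph{different} construction (the Akbulut--Kirby/Kauffman method for ribbon-immersed and pushed-in surfaces, which the paper treats in the following subsection). Applied to the black surface, whose own handle structure consists of $b$ black-disk $0$-handles and $c$ crossing-band $1$-handles, that construction yields a diagram with $b-1$ dotted $1$-handles and $c$ two-handles, not the diagram of the proposition. In particular your assertion that carrying out the crossing-by-crossing computation ``produces one $2$-handle attaching circle per white region'' is a non sequitur, and it contradicts your own Euler characteristic count of $w$ two-handles, since in general $c\neq w$. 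Similarly, the claim that branching over the pushed-in black disks ``contributes no second homology'' and is absorbed into the $0$-handle is false in that framework: the double cover of $D^4$ branched over $b$ split pushed-in disks is $\natural^{b-1}(S^1\times D^3)$, and the extra black disks are exactly what give the dotted circles.

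The paper's proof locates the $2$-handles elsewhere: not in the handle structure of $F$, but in a handle decomposition of the second cut-open ball relative to $N\times I$. Because the complement of $N$ in the diagram $2$-sphere consists of one disk per white region, one has $D^4=(N\times I)\cup(\mbox{one $0$-framed $2$-handle per white region})\cup(\mbox{$3$-handle})$, the intermediate stage being $S^2\times D^2$. The crossings enter only through the gluing map $\tau$: the attaching circles and framings naturally live in the boundary of the second ball, and one must transport them through $\tau$ into the boundary of the first, where the local picture at each crossing (Figure \ref{fig:dbc2h}) produces the clasp between neighbouring white-region circles and the $\pm1$ framing twist. This decomposition also yields the $3$-handle cancellation immediately (attaching all but one $2$-handle to $N\times I$ gives a $4$-ball; the last $2$-handle gives $S^2\times D^2$, which the $3$-handle cancels, and the omitted region is arbitrary), whereas you appeal to a belt-sphere intersection that you never exhibit. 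Your Heegaard-diagram paragraph and the Euler-characteristic bookkeeping are essentially correct and close to the paper, but to repair the main argument you must either switch to the white-region decomposition of the second ball, or genuinely carry out the ribbon-style construction and then perform the Kirby moves converting its diagram ($b-1$ dotted circles and $c$ two-handles) into the stated one --- neither of which your write-up does.
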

\begin{proof}

We note that the $n$-ball $D^n$ has a handle decomposition given by a single $0$-handle and a cancelling pair consisting of an $(n-2)$-handle and an $(n-1)$-handle.  Equivalently we can build $D^n$ by starting with $S^{n-2}\times D^2$ and adding a single $(n-1)$-handle, whose attaching sphere is $S^{n-2}\times \{\mbox{point}\}$.  Explicitly, we take a regular neighbourhood of the equatorial $S^{n-2}$ --- the intersection of the boundary of $D^n$ with a hyperplane through the origin --- and then attach a thickened-up equatorial slice (a neighbourhood of a disk in the same hyperplane).  In particular we can obtain $D^4$ by starting with $S^2\times D^2$ and attaching a 3-handle along $S^2\times \{\mbox{point}\}$.

So what does this have to do with double branched covers?  Suppose we have a copy of $D^4$ with a link $L$ in its boundary.  We project $L$ to the equatorial $S^2$ in the boundary sphere to obtain our diagram, and we chessboard-colour it to obtain our black surface $F$.  We take $N$ to be a regular neighbourhood of $F$ in $S^3$ and $N\times I$ to be a regular neighbourhood of $F$ in $D^4$.  The intersection of $N\times I$ with the 2-sphere of the diagram contains the black regions, connected together at the crossings; its complement in this 2-sphere consists of one disk for each white region.
By attaching one 2-handle for each white region of the diagram to $N\times I$ we obtain a regular neighbourhood $S^2\times D^2$ of the sphere containing the diagram, and then we attach a 3-handle to complete the 4-ball.

Now consider the description of $\Sigma(D^4,F)$ given above, by gluing together two copies of $D^4$ cut open along the homotopy pushing $F$ into the interior.  Each of these has a copy of $N$ in its boundary, and we recall that $N$ is the total space of an $I$-bundle over $F$.  We use the involution $\tau$ coming from reflection on the fibres of this $I$-bundle to identify the copies of $N\times I$ in each cut-open $D^4$; the quotient is $\Sigma(D^4,F)$.  We want to describe a handle decomposition of this.  We take one of the cut-open 4-balls to be our 0-handle.  This also contains $N\times I$ from the second copy, via the gluing map.  We then add a 2-handle for each white region and a 3-handle to complete the second copy of the cut-open $D^4$.

To draw the Kirby diagram, we need to understand how these handles are attached.  To build the 4-ball starting with $N\times I$, the attaching circles for the 2-handles lie in the plane of the diagram and are 0-framed.  After attaching all but one of the 2-handles to $N\times I$ the result is a 4-ball $D^2\times D^2\subset S^2\times D^2$; then attaching the last 2-handle changes this to $S^2\times D^2$, but this is cancelled by the attachment of the 3-handle.
To build the double branched cover, we need to see the image of these attaching circles and framings under the gluing map $\tau$.  The local picture for this is shown in Figure \ref{fig:dbc2h}.

\begin{figure}[htbp]
\centering
\includegraphics[width=12cm]{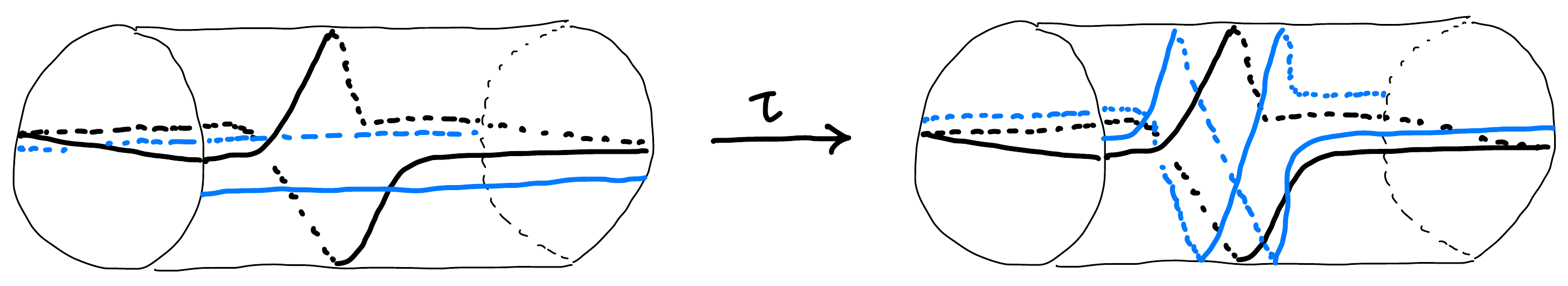} 
\caption
{{\bf The involution $\tau$ near a crossing.}}
\label{fig:dbc2h}
\end{figure}

We see that a crossing in the diagram results in a clasp between the 2-handle attaching circles coming from the white regions on either side of the crossing.  The effect on a framing curve, which is just a pushoff on the surface $\partial N$, is similar and results in  the framing curve getting a full twist around the attaching circle.  These local contributions exactly correspond to the description given above of the 2-handle attaching circles and their framing coefficients.

Finally the boundary $\Sigma(S^3,L)$ of $\Sigma(D^4,F)$ is the union of the boundaries of the two 4-balls that we glue together, minus the interior of $N$.  The complement of $N$ in $S^3$ is a 3-dimensional handlebody, and thus the construction naturally yields a Heegaard decomposition of $\Sigma(S^3,L)$.  The red curves of the Heegaard diagram  bound disks in the first copy of $S^3\setminus N$, and via Figure \ref{fig:dbc2h} we see that the blue curves bound in the second copy.  
\end{proof}

\subsection*{More double branched covers of $D^4$}
We have seen how to draw a simple Kirby diagram for the double cover of $D^4$ branched along the black surface of a link diagram.  One can also draw Kirby diagrams of more general surfaces in $D^4$ and hence compute their homological invariants including intersection form.  For details on this see \cite{akbulut,AK,GS,GLslice}.  
We give a brief description here for the case of a ribbon surface.  We take $F$ to be a ribbon-immersed surface in $S^3$ with a handle decomposition chosen so that all ribbon singularities consist of 1-handles passing through 0-handles.  Draw this in $\rr^3$ with the 0-handles in the $xz$-plane, lying just below the $x$-axis.  
All attachments of 1-handles to 0-handles should be on the $x$-axis and all ribbon singularities below the $x$-axis in the interiors of the 0-handles.  It helps to also ``comb the 1-handles up" so that they mostly lie near the $xz$-plane and above the $x$-axis.  We then take the preimage of this diagram under the double branched covering map
$$\phi:\rr^3\to\rr^3,\quad(x,\zeta=y+iz)\mapsto(x,\zeta^2).$$
An example is shown in Figure \ref{fig:stevedoredbc} for the stevedore ribbon disk encountered in Lecture 1.  
In practice one can draw the preimage of the diagram under $\phi$ by taking the diagram in the upper half plane, away from the $x$-axis, and revolving it around the $x$-axis, then connecting the tangle diagrams in the upper half plane and the lower half plane with a half twist of 4 strands for each ribbon singularity, and with no twist on two strands at the end of each 1-handle (see Figure \ref{fig:stevedoredbc}).

\begin{figure}[htbp]
\centering
\includegraphics[width=12cm]{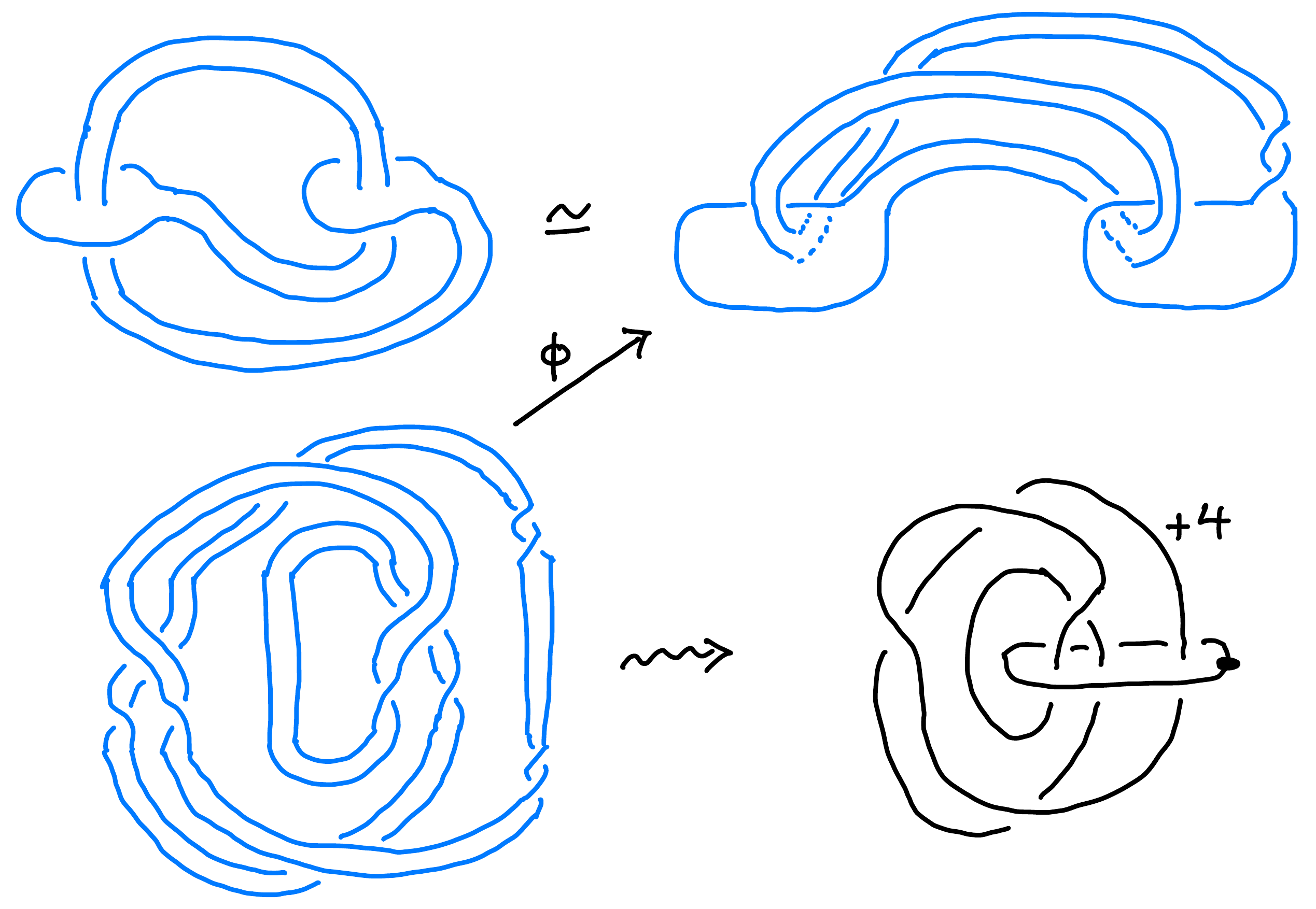} 
\caption
{{\bf The double branched cover of the stevedore ribbon disk.}}
\label{fig:stevedoredbc}
\end{figure}

From this we draw a Kirby diagram as follows.  Each 1-handle of $F$ gives rise to an embedded annulus in $\phi^{-1}(\rr^3)$; we take the core of this annulus to be the attaching circle for a 2-handle, and the linking number of the boundary of the annulus is the framing coefficient.  We also draw 1-handles in dotted circle notation, one for all but one of the 0-handles of $F$; these are drawn around the boundary of the preimage under $\phi$ of the corresponding 0-handle of $F$.  Again, see Figure \ref{fig:stevedoredbc} for an example.  For details and more examples see the sources cited above.  

The main point here is that given a surface $F$ in $D^4$, there are known methods for obtaining a Kirby diagram of $\Sigma(D^4,F)$ and hence computing the homology groups and the intersection pairing of this manifold.  In \cite{GLslice} the author and Strle give a definition of \emph{disoriented homology} groups associated to such a surface $F$, together with a bilinear pairing on one of the groups.  We  show that these are isomorphic to the homology groups of the double branched cover, together with the intersection pairing.  This enables these invariants to be computed directly from a diagram of $F$, without needing to first obtain a Kirby diagram.

For the remainder of these notes, if $F$ is a properly embedded surface in $D^4$ or a surface embedded or ribbon-immersed in $S^3$ giving rise to a pushed-in surface in $D^4$, then we will use the notation $\Sigma(D^4,F)$ to refer to the resulting double branched cover.  We will also use $\Lambda_F$ to refer to the intersection lattice of this manifold, or in other words the second homology group modulo torsion of $\Sigma(D^4,F)$ together with its intersection pairing.


\section{Lecture 3}

Our goal in this final lecture is to explain how Donaldson's diagonalisation theorem \cite{thmA} gives rise to a very useful slice obstruction for alternating knots, and also to an obstruction to any given band move being part of a sequence that yields a ribbon disk.  A computer search for ribbon alternating knots based on this has been implemented in joint work of the author and Frank Swenton \cite{ribbonalg} which has resulted in a database of over 200,000 previously unknown slice knots.

\subsection*{Goeritz, Gordon-Litherland, and Donaldson}

We begin by describing the Goeritz lattice of a connected chessboard-coloured diagram.  Label the white regions $R_0,\dots,R_m$ and then write down a square matrix $\widehat{G}$ as follows.  The rows and columns of $\widehat{G}$ are numbered from $0$ to $m$.  The diagonal entries $\widehat{G}_{ii}$ are given by a signed count of the crossings around region $R_i$, with signs given as follows.
\begin{center}
\includegraphics[width=6cm]{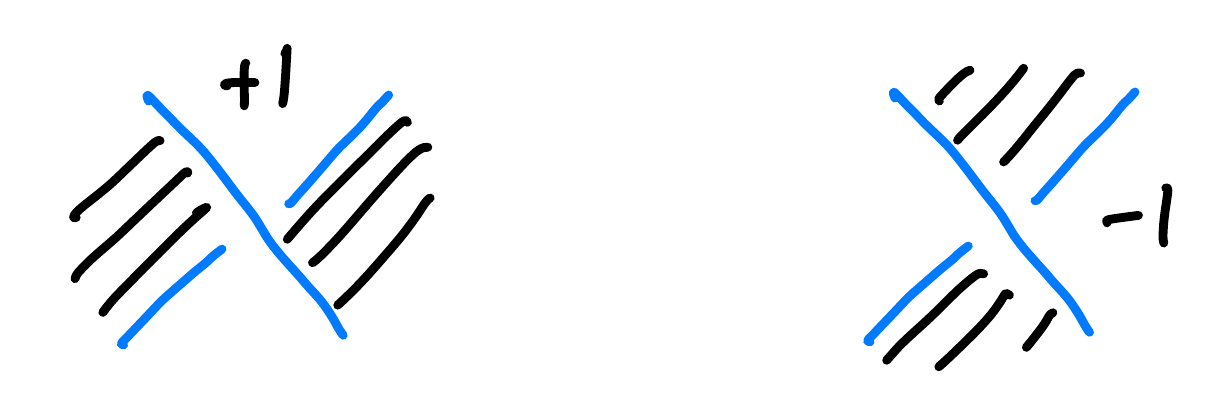} 
\end{center}
The off-diagonal terms $\widehat{G}_{ij}$ are given by a signed count of the crossings between $R_i$ and $R_j$, with the opposite signs to those above.  The Goeritz matrix $G$ is obtained from $\widehat{G}$ by deleting the $0$th row and column.  Here is an example.
\begin{center}
\includegraphics[width=8cm]{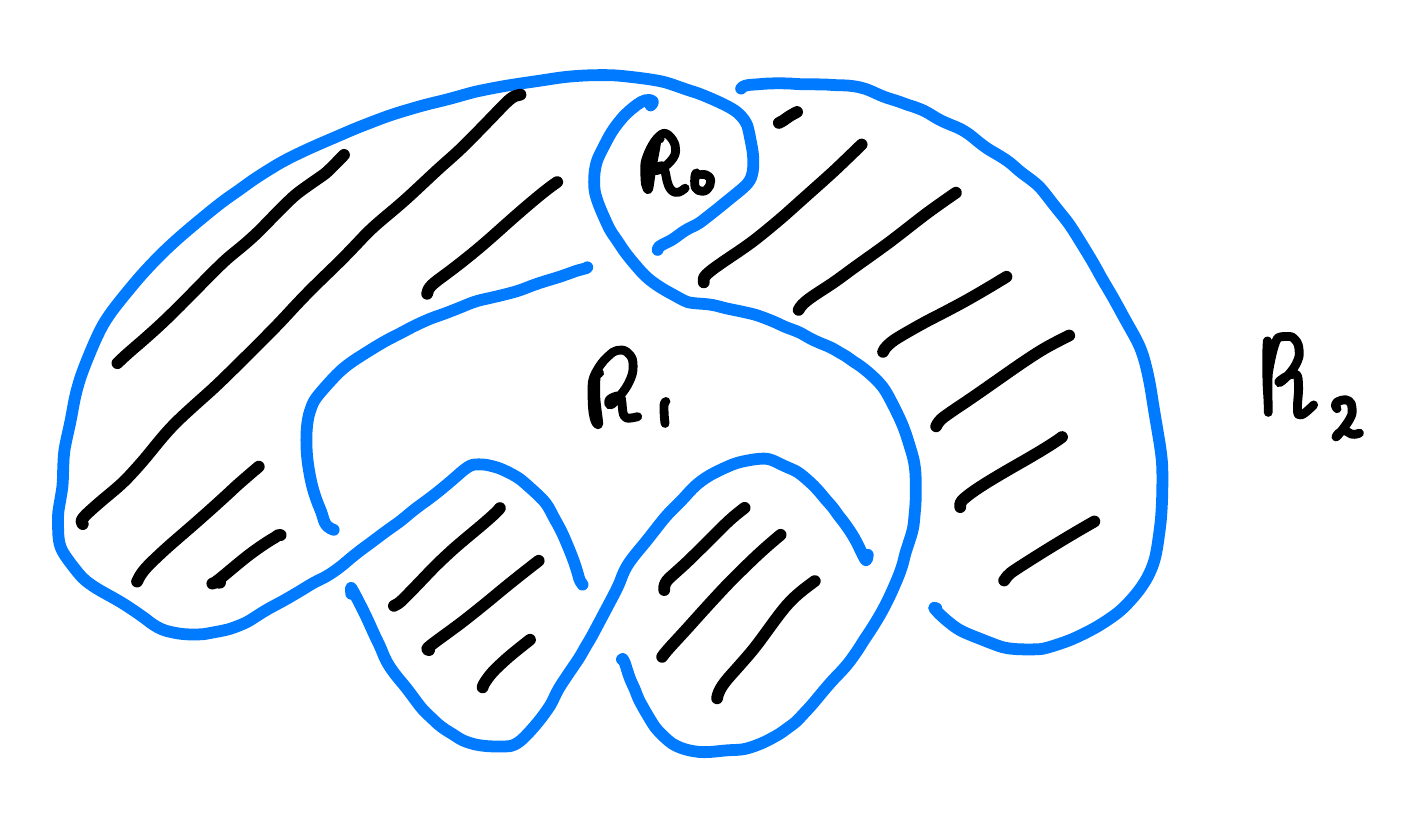} 
\end{center}
We get
$$\widehat{G}=
\begin{bmatrix} 2 & -1 & -1\\ -1& -2 & 3\\ -1 & 3 & -2
\end{bmatrix},
\mbox{ and }
G=\begin{bmatrix} -2 & 3\\ 3 & -2
\end{bmatrix}.
$$
This was introduced by Goeritz in 1933 \cite{goeritz}.  He showed that the determinant and more generally the \emph{Minkowski units} \cite{mur} of $G$ give rise to knot invariants.  The astute reader may notice that the matrices above represent the intersection pairing which may be computed from the Kirby diagram for $\Sigma(D^4,F)$ in Figure \ref{fig:fig8dbc}.

The Goeritz lattice of a chessboard-coloured diagram is described as follows.  One takes the (free) abelian group $\Lambda$ with one generator given by each white region, and one relation given by the sum of all of the generators.  Note this has basis given by all but one of the generators.  The pairing $$\lambda:\Lambda\times \Lambda\to \zz$$ is given by the matrix $\widehat{G}$.
Comparing to the Kirby diagram from Lecture 2, we recover Gordon-Litherland's result that the Goeritz lattice is the intersection lattice on the second homology group of the double branched cover $\Sigma(D^4,F)$ \cite{GL}.

We also observe the following.  The author learned this proof from Josh Greene.  For more topological proofs see \cite[Prop. 4.1]{altsurf} or \cite[Lemma 3.1]{ribbonalg}.
\begin{lemma}
\label{lem:altdef}
The Goeritz lattice of a connected alternating diagram is definite.  More precisely, for one choice of chessboard-colouring the Goeritz lattice is positive definite,  and for the other it is negative definite. 
\end{lemma}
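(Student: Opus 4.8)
The plan is to read a rank-one decomposition of $\widehat{G}$ directly off its definition and then recognise the resulting matrix as a graph Laplacian. Each crossing $c$ of the diagram meets exactly two white regions, say $R_{i(c)}$ and $R_{j(c)}$, and by the definitions of the diagonal and off-diagonal entries it contributes $\eta_c$ to each of $\widehat{G}_{i(c)i(c)}$ and $\widehat{G}_{j(c)j(c)}$ and $-\eta_c$ to $\widehat{G}_{i(c)j(c)}$, where $\eta_c\in\{+1,-1\}$ is the sign attached to $c$ by the sign convention indicated above. Writing $v_c=e_{i(c)}-e_{j(c)}$ for the corresponding vector in $\zz^{m+1}$, this reads
$$\widehat{G}=\sum_c \eta_c\, v_c v_c^{\,T},$$
the sum running over all crossings (a crossing whose two white regions coincide gives $v_c=0$ and may be ignored).

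The key step, and the one I expect to be the main obstacle, is to show that for an alternating diagram with a fixed chessboard-colouring all of the signs $\eta_c$ agree. This is exactly the local consistency that alternation enforces. I would verify it by examining the two crossing types in the sign figure and checking that $\eta_c$ is preserved as one passes between the two crossings at the ends of any edge of the diagram: along such an edge the black and white sides are fixed, while the alternating condition reverses the over/under status, and these two effects compensate. Since in a connected diagram all crossings are joined through its edges, a single propagation forces every $\eta_c$ to equal one common value, which I will take to be $+1$ for one of the two colourings (and hence $-1$ for the other). Making the local sign bookkeeping precise is the genuinely fiddly part, and it is where the hypothesis ``alternating'' actually gets used.

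With all $\eta_c=+1$ the decomposition becomes $\widehat{G}=\sum_c v_c v_c^{\,T}$, which is the Laplacian of the \emph{white graph} $\Gamma$ whose vertices are the white regions and whose edges are the crossings, each joining the two white regions it meets. Being a sum of rank-one terms $v_cv_c^{\,T}$, this matrix is positive semidefinite, and for any $x$ one has $x^{T}\widehat{G}x=\sum_c\bigl(x_{i(c)}-x_{j(c)}\bigr)^2$; hence its kernel consists of those $x$ that are constant on each connected component of $\Gamma$. Connectivity of the diagram makes $\Gamma$ connected, so the kernel is precisely the line spanned by the all-ones vector $\mathbf{1}$. This is exactly the relation imposed in the definition of the Goeritz lattice $\Lambda=\zz^{m+1}/\langle\mathbf{1}\rangle$, so the form descends to a positive definite pairing on $\Lambda$; equivalently the reduced matrix $G$, obtained by deleting the $0$th row and column, is positive definite.

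Finally, switching to the other chessboard-colouring interchanges the black and white regions and reverses the sign at every crossing, so the identical argument applies verbatim to $-\widehat{G}$ and produces a negative definite Goeritz lattice. Together these two cases give the asserted dichotomy.
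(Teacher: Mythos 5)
Your proof is correct, and it reaches definiteness by a genuinely different route from the paper's. Both arguments share the same first step, which the paper asserts without proof: for a fixed chessboard colouring of an alternating diagram all crossings carry the same Goeritz sign, and your edge-propagation sketch of this is sound (the over/under flip at the two ends of an edge and the fixed white side of that edge compensate exactly as you say). After that the proofs diverge. The paper observes that the rows of $\widehat{G}$ sum to zero, applies Gershgorin's circle theorem to the reduced matrix $G$ to conclude its eigenvalues are nonnegative, and then rules out a zero eigenvalue by citing Crowell's theorem that a connected alternating diagram has nonzero determinant. You instead recognise $\widehat{G}$ as the graph Laplacian of the white Tait graph $\Gamma$, get positive semidefiniteness from the sum-of-squares identity, and get strict definiteness of the induced form on $\Lambda$ from the fact that a connected graph's Laplacian has kernel exactly the constants. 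What each approach buys: yours is more elementary and self-contained, avoiding Crowell's theorem entirely --- indeed, combined with the matrix-tree theorem it reproves that the determinant of a connected alternating link is nonzero, being the number of spanning trees of $\Gamma$ --- whereas the paper's route outsources nondegeneracy to the literature and therefore never needs to know that $\Gamma$ is connected. That connectivity claim, which you dispatch in one clause, is the one point in your write-up that genuinely requires an argument: it is true and standard (for a connected diagram each black region is a disc, and walking around its boundary connects, through the crossings there, all white regions adjacent to it; alternatively compare Euler characteristics), and it plays exactly the role in your proof that Crowell's theorem plays in the paper's. A final small nit: for the opposite colouring the Goeritz matrix is a different matrix, built from the other family of regions, not literally $-\widehat{G}$; what your argument shows is that this new matrix is minus a connected graph Laplacian, hence negative definite --- clearly what you intend, and the same brevity the paper allows itself when it says the other colour is ``similar.''
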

\begin{proof} The Goeritz lattice of the crossingless diagram of the unknot is trivial and hence both positive and negative definite.  Assume now that our diagram has at least one crossing.
Being alternating is equivalent to all crossings having the same sign when computing the Goeritz matrix.  Assume that all of these signs are $+1$, so that the diagonal entries of $\widehat{G}$ are all positive and the off-diagonal entries are all nonpositive.  The rows of $\widehat{G}$ sum to zero (for any diagram).  Let $r_i$ denote the sum of the absolute values of the off-diagonal entries of $G$ in the $i$th row.  Then $r_i=G_{ii}-|\widehat{G}_{0i}|$ is less than or equal to $G_{ii}$.  The eigenvalues of the symmetric matrix $G$ are real and by Gershgorin's circle theorem \cite{circlethm}, they are contained in
$$\bigcup[G_{ii}-r_i,G_{ii}+r_i]$$
and hence nonnegative.  Since their product is the determinant of a link  with connected alternating diagram and hence nonzero \cite{crowell}, the eigenvalues are all positive.  The proof for the other colour is similar.
\end{proof}

We note that Josh Greene and Josh Howie independently proved in 2015 that the property in \Cref{lem:altdef} characterises alternating links: a link admits a connected alternating diagram if and only if it bounds two embedded surfaces in $S^3$ for which the double branched cover of $D^4$ has definite intersection pairing (one positive and one negative) \cite{altsurf,howie}.

We now describe a key slice obstruction for alternating knots.  The double cover of $S^3$ branched along the unknot is $S^3$, and the double cover of $D^4$ branched along an unknotted properly embedded $D^2$ --- or in other words along the standard slice disk for the unknot --- is again $D^4$.  An imprecise slogan that has some validity is that ``rational homology does not see knotting".  This manifests itself here in the fact that the double cover of $S^3$ branched along any knot is a rational homology $S^3$ \cite[Chapter 9]{lick}, and the following fact due to Casson-Gordon \cite{CG}: the double cover $\Sigma(D^4,\Delta)$ branched along any properly embedded disk has the same rational homology as the 4-ball.

We can combine this with Donaldson's celebrated diagonalisation theorem \cite{thmA}, which states that if $X$ is a closed smooth 4-manifold with a positive-definite intersection form then in fact its intersection lattice $\Lambda_X=(H_2(X;\zz)/\tors,Q_X)$ is isomorphic to the standard $\zz^n$ lattice; in other words, $\Lambda_X$ admits an orthonormal basis.  This gives us the following.

\begin{lemma}
\label{lem:don}
Let $K$ be an alternating knot and let $F$ be the black surface of its alternating diagram, with chessboard colouring chosen so that $\Lambda_F$ is positive definite.  Let $b$ be a band move applied to $K$.  Then
\begin{enumerate}
\item if $K$ is slice, then $\Lambda_F$ embeds as a finite index sublattice of a standard $\zz^m$ lattice;
\item if $b$ is part of a sequence of band moves giving a ribbon disk for $K$, then there is a commutative diagram of lattice morphisms
$$\begin{tikzpicture}[scale=0.5]
\node at (0,0) {$\Lambda_F$};
\node at (5,0) {$\zz^m$,};
\node at (2.5,-2) {$\Lambda_{F\cup b}$};
\draw[->] (1,0) to (4,0);
\draw[->] (0.7,-0.7) to (1.6,-1.4);
\draw[->] (3.4,-1.4) to (4.3,-0.7);
\end{tikzpicture}$$
where the horizontal arrow is the inclusion from (1), and $F\cup b$ is a ribbon-immersed surface in $S^3$ obtained by attaching the band $b$ to $F$.
\end{enumerate}
\end{lemma}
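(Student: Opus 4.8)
The plan is to derive both parts from Donaldson's theorem applied to a closed definite $4$-manifold built by gluing $\Sigma(D^4,F)$ to the double branched cover of a slice disk. To prove (1), suppose $K$ bounds a smooth slice disk $\Delta\subset D^4$. Capping $F$ off by $\Delta$ gives a closed (possibly nonorientable) surface $\hat F=F\cup_K\Delta$ in $S^4=D^4\cup_{S^3}D^4$, and I would first check that its double branched cover decomposes over the two hemispheres as
$$X:=\Sigma(S^4,\hat F)=\Sigma(D^4,F)\cup_{\Sigma(S^3,K)}\bigl(-\Sigma(D^4,\Delta)\bigr),$$
the pieces being glued along $\Sigma(S^3,K)$, oriented so that the intersection form of the first piece is $\Lambda_F$. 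By the Casson--Gordon fact quoted above \cite{CG}, $\Sigma(D^4,\Delta)$ is a rational homology ball, and since $\Sigma(S^3,K)$ is a rational homology sphere a Mayer--Vietoris argument shows that the inclusion $\Sigma(D^4,F)\hookrightarrow X$ induces a rational isomorphism $\Lambda_F\otimes\qq\to H_2(X;\qq)$. Hence $X$ is a closed smooth $4$-manifold whose form agrees rationally with $\Lambda_F$, which by \Cref{lem:altdef} and the chosen colouring is positive definite; Donaldson's theorem \cite{thmA} then yields $\Lambda_X\cong\zz^m$ with $m=\rk\Lambda_F$.

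It remains to identify the induced map $\iota_*\colon\Lambda_F\to\Lambda_X\cong\zz^m$. Because the inclusion of a codimension-$0$ submanifold preserves intersection numbers, $\iota_*$ is a morphism of lattices; the rational isomorphism above shows it is injective with full-rank image, so that image is a finite-index sublattice of $\zz^m$ on which the standard form restricts to $\Lambda_F$. This gives statement (1).

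For (2) I would refine the decomposition so as to factor $\iota_*$ through $\Lambda_{F\cup b}$. If $b$ is the first band of a ribbon disk $D$ for $K$, then with respect to the radial Morse function on the lower hemisphere $b$ is the outermost saddle, so I can choose a new splitting sphere $S^3_b$ that absorbs the band into the upper hemisphere. Writing $D=b\cup D'$, this presents $\hat F$ with $\hat F\cap D^4_+=F\cup b$ and $\hat F\cap D^4_-=D'$, whence
$$X=\Sigma(D^4,F\cup b)\cup_{\Sigma(S^3,K_b)}\bigl(-\Sigma(D^4,D')\bigr).$$
Since the enlarged upper hemisphere contains the original one, I obtain nested codimension-$0$ inclusions $\Sigma(D^4,F)\hookrightarrow\Sigma(D^4,F\cup b)\hookrightarrow X$ whose composite is the inclusion of (1). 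Passing to $H_2/\tors$ and using that each inclusion preserves intersection numbers produces lattice morphisms $\Lambda_F\to\Lambda_{F\cup b}\to\zz^m$; functoriality of homology forces their composite to be exactly the embedding $\iota_*$ of (1), which is the asserted commutative triangle. (Note $\Lambda_{F\cup b}$ need not be definite, and the second morphism may have the radical of $\Lambda_{F\cup b}$ in its kernel, so no injectivity is claimed there.)

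The main obstacle is the geometric bookkeeping rather than the homological algebra. One must verify carefully that gluing the two branched covers along $\Sigma(S^3,K)$ really reconstitutes $\Sigma(S^4,\hat F)$, and, for (2), that absorbing the outermost saddle $b$ into the upper hemisphere is a legitimate re-splitting that replaces the branch locus $F$ by the ribbon-immersed surface $F\cup b$ while keeping all inclusions compatible. Both facts should follow from the fibre-reflection model of the double branched cover used in Lecture~2 and the behaviour of the involution $\tau$ near a band (cf.\ \Cref{fig:dbc2h}), but making the handle-level identifications precise, and confirming that the resulting maps are genuine lattice morphisms with the stated finite index, is where the real work lies.
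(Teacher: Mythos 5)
Your proposal is correct and takes essentially the same route as the paper: part (1) is the identical argument (glue $\Sigma(D^4,F)$ to the branched cover of the slice disk to get a closed manifold $X=\Sigma(S^4,F\cup\Delta)$, use Casson--Gordon plus Mayer--Vietoris to see $\Lambda_F$ sits with finite index in $\Lambda_X$, then apply Donaldson), and your part (2) differs only in bookkeeping. Re-splitting $S^4$ along a level sphere pushed past the outermost saddle is just the level-set formulation of the paper's statement that $X$ is built from $\Sigma(D^4,F)$ by attaching a 2-handle for each band (so that $\Sigma(D^4,F\cup b)$ is the intermediate codimension-0 submanifold), giving the same factorisation $\Lambda_F\to\Lambda_{F\cup b}\to\zz^m$.
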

\begin{proof}[Proof sketch] For part (1), we use the fact that the double branched cover $\Sigma(D^4,F)$ of $F$ is positive definite and the double branched cover $\Sigma(D^4,\Delta)$ of a slice disk is a rational homology ball.  Gluing these along their common boundary gives a smooth closed manifold $X=\Sigma(S^4,F\cup\Delta)$; this is the double cover of the 4-sphere branched along a properly embedded closed surface obtained from $F$ and $\Delta$.  The Mayer-Vietoris sequence shows that $\Lambda_F$ is a finite index sublattice of the intersection lattice of $X$.  It follows that the latter  is also positive definite, and thus  diagonalisable by Donaldson.  (This is the slice obstruction used by Lisca in \cite{lisca}.)

Part (2) follows from the fact that the closed manifold $X=\Sigma(S^4,F\cup\Delta)$ is built from $\Sigma(D^4,F)$ by adding a 2-handle for each 1-handle of $\Delta$ followed by a 3-handle for each minimum of $\Delta$ and finally a 4-handle (see for example \cite{GS,GLslice}).  This gives a commutative diagram of manifold inclusions
$$\begin{tikzpicture}[scale=0.5]
\node at (-1.2,0) {$\Sigma(D^4,F)$};
\node at (5,0) {$X$,};
\node at (2.5,-2) {$\Sigma(D^4,F\cup b)$};
\draw[->] (1,0) to (4,0);
\draw[->] (0.7,-0.7) to (1.6,-1.4);
\draw[->] (3.4,-1.4) to (4.3,-0.7);
\end{tikzpicture}$$
from which the commutative diagram of lattice morphisms follows.
\end{proof}

There are a couple of points to note about \Cref{lem:don}.  The first is that the mirror of a slice knot is slice, so one may change the crossings and the colour and apply the  lemma twice to each alternating knot and to each band move.  The second is that following Greene and Jabuka \cite{GJ}, one may strengthen the lemma using Heegaard Floer homology.  This puts extra conditions on the lattice embedding in the lemma, namely that every equivalence class in $\zz^m/\Lambda_F$ is represented by an element of $\{0,1\}^m$.

As an example let's consider the stevedore knot $K$ together with the following band move $b$.
\begin{center}
\includegraphics[width=8cm]{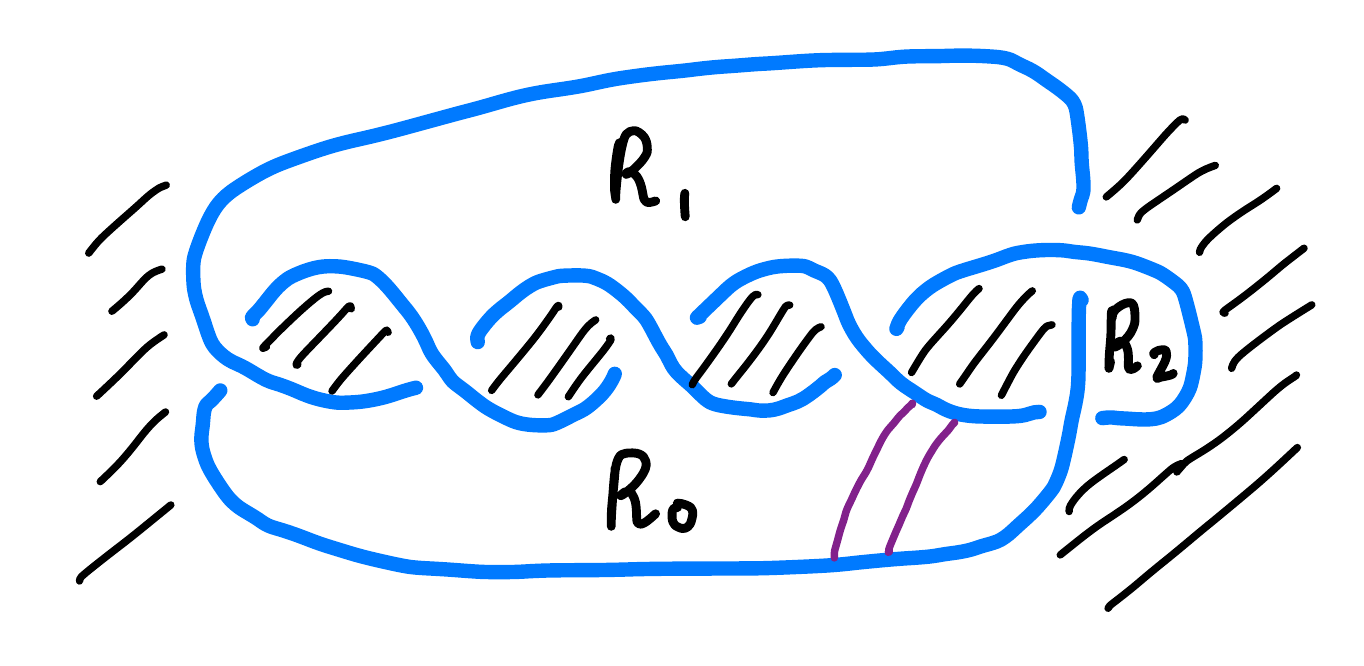} 
\end{center}
We can calculate $\Lambda_F$ using the Goeritz matrix: we find this has rank 2 and the pairing is given by $G=\begin{bmatrix}5&-1\\-1&2\end{bmatrix}$, using generators $v_1$ and $v_2$ corresponding to the white regions $R_1$ and $R_2$.  The first part of \Cref{lem:don} tells us that this admits a lattice embedding in $\zz^2$ with the dot product: sure enough if we map $v_1$ to $(2,1)$ and $v_2$ to $(-1,1)$, this preserves the pairing.  Moreover, this embedding is unique up to symmetries.  Now we apply the second part of the lemma.  The band $b$, shown in purple, would become part of the black surface for the new diagram.  This would split $R_0$ into two new regions.  We can choose the new generator $v_3$ to correspond to the resulting small white region on the right side of the band.  Then we can calculate the Goeritz of the new diagram and we find that
\begin{align*}
v_3\cdot v_1&=0\\
v_3\cdot v_2&=-1.
\end{align*}
By part (2) of the lemma, if $b$ is part of a sequence giving a ribbon disk, then there exists a vector $(x,y)\in\zz^2$ with
\begin{align*}
(x,y)\cdot (2,1)&=0\\
(x,y)\cdot (-1,1)&=-1.
\end{align*}
Solving  over the rationals we find $(x,y)=(1/3,-2/3)\notin\zz^2$.  We conclude that this band move is not part of any sequence giving a ribbon disk for $K$.

The reader may compare this to the band move given below, and see that it converts $K$ to the two-component unlink, and in fact is obtained by reversing the movie shown for $K$ in Lecture 1.  In this case the new white region to the right of the band gets sent to $(0,-1)\in\zz^2$.
\begin{center}
\includegraphics[width=\textwidth]{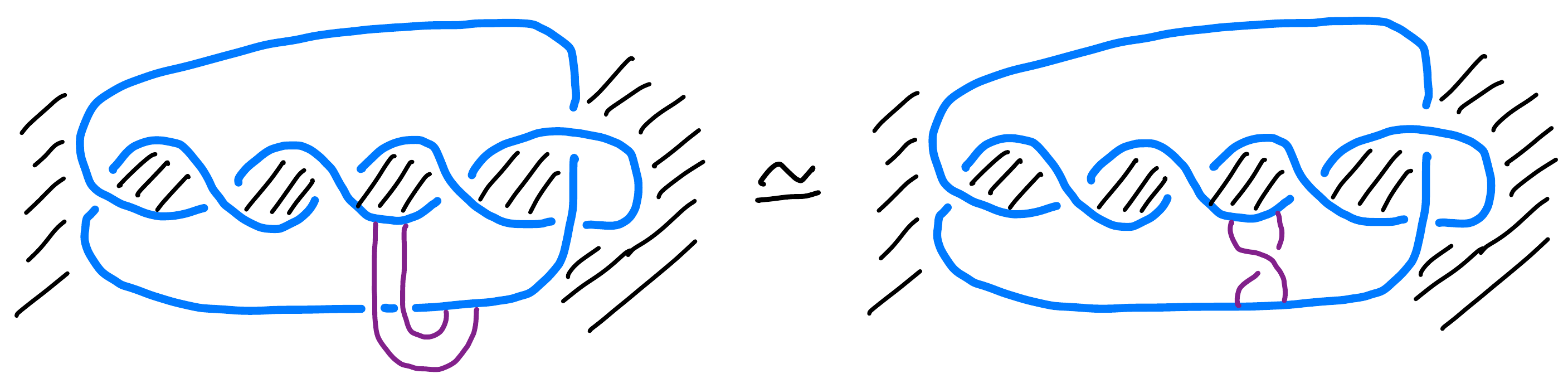} 
\end{center}

For more complicated band moves one cannot always compute $\Lambda_{F\cup b}$ using the Goeritz matrix.  One may always either draw a Kirby diagram, or calculate using disoriented homology \cite{GLslice}.  This helpful tool does not seem to be sufficient to give an answer to the following.

\begin{question}
Is there a unique isotopy class of ribbon disk for the stevedore knot?
\end{question}

The search algorithm of \cite{ribbonalg} is based on the band obstruction of \Cref{lem:don}, and on an optimistic notion that alternating knots like to stay alternating.  Recall that the nullity of a link in $S^3$ is the first Betti number of its double branched cover.  The nullity of a knot is zero, and that of an unlink is one less than the number of components.  In a movie of a ribbon disk starting with a knot and ending with an unlink, each band move increases the nullity  by one.  Connected alternating diagrams have nullity zero, and in fact the nullity gives a lower bound on how many ``nonalternating" crossings a diagram can have.  Our algorithm then applies the following steps to a given alternating knot:
\begin{enumerate}
\item Check if the knot is obstructed by \Cref{lem:don} (1) from being slice.
\item Try all band moves which are unobstructed by \Cref{lem:don} (2) and which keep the resulting link as close to alternating as possible (i.e., the number of nonalternating crossings and the nullity both increase by 1).
\item For each such band move, simplify the resulting link diagram using Tsukamoto-type moves.
\item Repeat the previous two steps until all possibilities are exhausted or a crossingless diagram is obtained.  In the latter case, we declare the knot to be \emph{algorithmically ribbon}.
\end{enumerate}

Some remarks are in order.  We show in \cite{ribbonalg} that the bands to be considered in step (2) take the following form,
\begin{center}
\includegraphics[width=10cm]{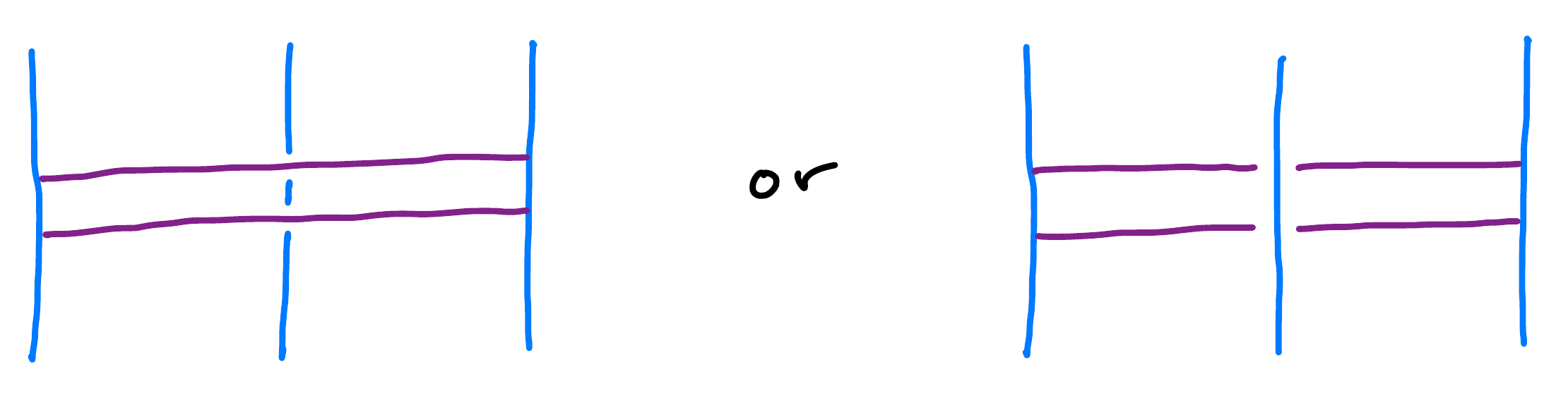} 
\end{center}
so that there are always finitely many to consider.  There are also finitely many alternating diagrams of a given knot, and we have to deal with all of them.

Also, what are Tsukamoto-type moves?  These are a set of  local modifications on diagrams which preserve isotopy type, and also the property of being close to alternating and unobstructed by a slight generalisation of \Cref{lem:don} which applies to such diagrams \cite[Prop. 3.5]{ribbonalg}.  They include Reidemeister I and II moves and five others shown below, the first of which is the famous flype move of P.~G.~Tait.
\begin{center}
\includegraphics[width=\textwidth]{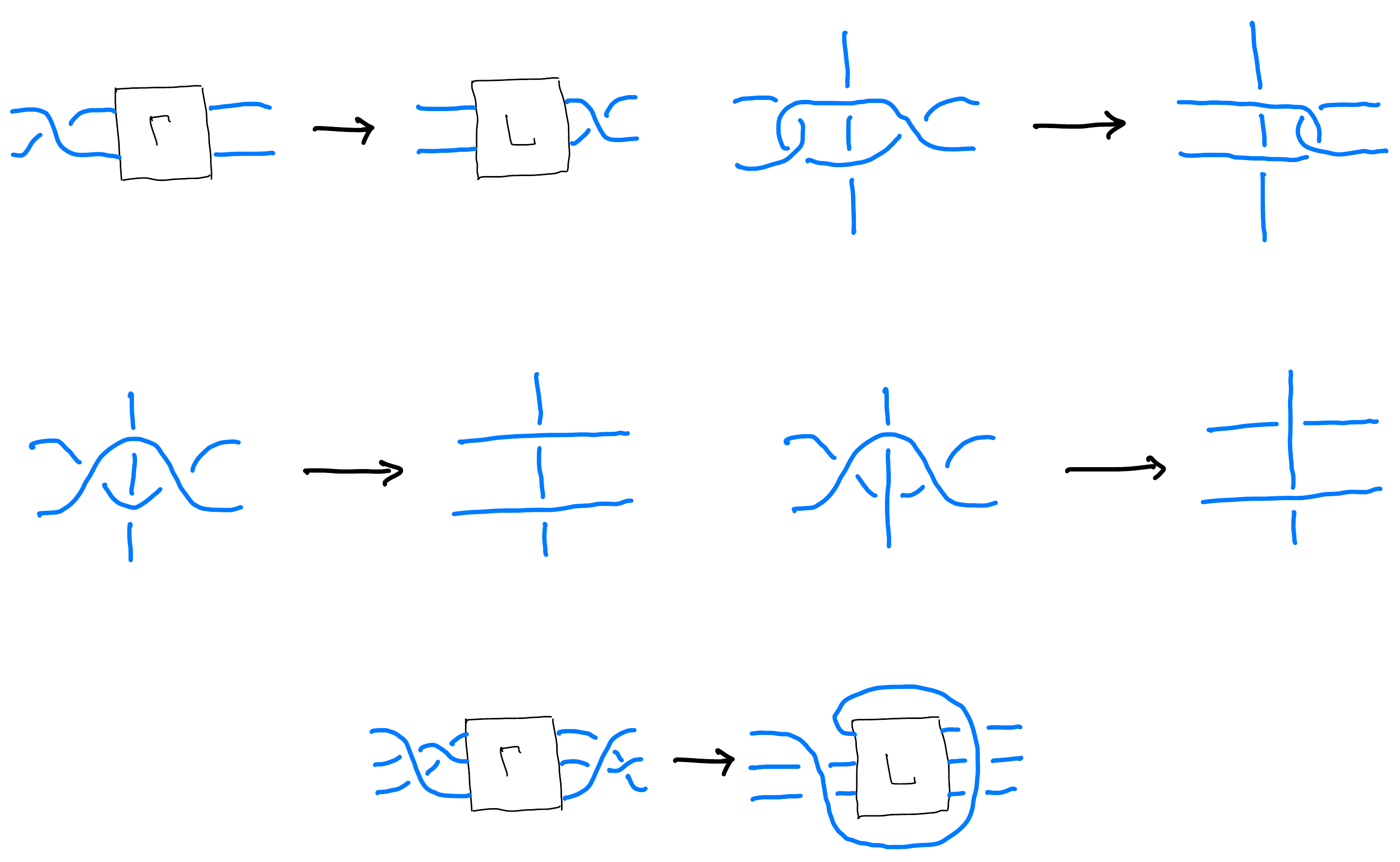} 
\end{center}
The third of these was called the \emph{untongue move} by Tsukamoto, who proved in \cite{tsuk} that every almost-alternating\footnote{A diagram is almost alternating if it contains exactly one nonalternating crossing.} diagram of the two-component unlink may be converted to the crossingless diagram by a sequence of flypes, untongues, and Reidemeister I and II moves.  It follows that if an alternating knot admits a ribbon disk which can be presented by a single band move converting it to an almost-alternating diagram of the unlink, then in fact this disk will be detected by our algorithm.  This together with experimental evidence and optimism led us to state the following in \cite{ribbonalg}.

\begin{conjecture}
If an alternating knot admits a ribbon disk with three critical points, then in fact it is algorithmically ribbon.
\end{conjecture}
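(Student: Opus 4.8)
The plan is to reduce the conjecture to the single case in which Tsukamoto's theorem applies directly, and then to isolate the one geometric input that is missing. First I would unpack what ``three critical points'' means for a ribbon disk. Since a disk has Euler characteristic one and a ribbon disk has no maxima, three critical points force exactly two minima and one saddle. Reading the associated movie from the knot outward, the entire disk is therefore encoded by a single band move $b$ carrying $K$ to the two-component unlink. As noted in the text, if this band, performed on some alternating diagram of $K$, produces an \emph{almost-alternating} diagram of the two-component unlink, then Tsukamoto's theorem \cite{tsuk} converts that diagram to the crossingless diagram using only flypes, untongues, and Reidemeister I and II moves --- precisely the Tsukamoto-type moves of step (3) of the algorithm. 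Since these moves preserve the property of being close to alternating and unobstructed by the generalisation of \Cref{lem:don} used in \cite{ribbonalg}, the algorithm's search contains this path and declares $K$ algorithmically ribbon. Thus the conjecture is equivalent to the following purely three-dimensional statement: every single band move realising a three-critical-point ribbon disk for an alternating knot can be isotoped so that it is one of the algorithmic bands and its result is an almost-alternating diagram of the unlink.

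To attack this statement I would fix a reduced alternating diagram $\mathcal D$ of $K$ and isotope the band $b$ so that it lies in a neighbourhood of $\mathcal D$. The resulting diagram of the unlink has nullity one, and since the nullity gives a lower bound on the number of nonalternating crossings, the most one can hope for is a single nonalternating crossing; the task is to show this optimum is achieved. The natural guide is the lattice data of \Cref{lem:don}(2): the band $b$ contributes a generator $v$ to $\Lambda_{F\cup b}$ whose pairings with the generators of $\Lambda_F$ are constrained by the commutative diagram, and under the strengthening of Greene--Jabuka \cite{GJ} these pairings become highly restrictive. I would try to translate this algebraic rigidity into a geometric normal form for $b$, combining it with the flyping theorem for alternating diagrams to move between the finitely many alternating diagrams of $K$ and to place the two feet of the band in standard position relative to the chessboard surface $F$.

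The hard part will be exactly this translation. A generic band on an alternating diagram creates many nonalternating crossings, and there is no a priori reason that the isotopy class of a ribbon band should be compatible with the alternating structure; the lattice embedding controls the \emph{homology} of $\Sigma(D^4,F\cup b)$ but not directly the planar combinatorics of the band. What is really needed is a classification of single-band ribbon moves on alternating knots, up to the Tsukamoto-type moves, analogous to Tsukamoto's own classification of almost-alternating diagrams of the unlink --- and it is the absence of such a classification that leaves the statement a conjecture rather than a theorem. I would therefore expect the decisive step to be a structural result showing that any single ribbon band can be ``combed'' into algorithmic form without ever leaving the class of diagrams with nullity and nonalternating-crossing count both increasing by one, at which point the reduction above closes the argument.
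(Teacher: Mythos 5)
You have not proved the statement, and you correctly diagnose why: it is a \emph{conjecture}, and the paper itself offers no proof --- only the partial result you rederive, together with experimental evidence. Your reduction is sound as far as it goes and mirrors the paper's own motivating discussion: a ribbon disk with three critical points has two minima and one saddle, hence is encoded by a single band move taking $K$ to the two-component unlink, and Tsukamoto's theorem guarantees that the algorithm detects precisely those disks whose band, performed on some alternating diagram of $K$, yields an \emph{almost-alternating} diagram of the unlink. The missing step is exactly the one you name in your final paragraph: there is no known argument that an arbitrary ribbon band on an alternating knot can be isotoped into algorithmic position so that the resulting unlink diagram has only one nonalternating crossing. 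The lattice-embedding rigidity of \Cref{lem:don}(2), even with the Greene--Jabuka refinement \cite{GJ}, constrains only the homology of $\Sigma(D^4,F\cup b)$ and not the planar combinatorics of the band, so it cannot by itself supply the required normal form; this is precisely why the statement remains open rather than being a theorem.

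Two smaller points. First, your claimed \emph{equivalence} between the conjecture and your three-dimensional statement is an overstatement: your statement implies the conjecture, but the converse fails a priori, since the algorithm could certify the knot as algorithmically ribbon via a different disk, or via a longer sequence of band moves, than the given three-critical-point one. Second, isotoping the band to lie in a neighbourhood of a chosen alternating diagram is itself part of what must be proved; a priori the band is only given relative to some (possibly non-alternating) diagram, and moving between the finitely many alternating diagrams of $K$ by flypes need not carry the band along in any controlled way. In summary, your proposal is a correct and honest account of the state of the art --- the reduction via Tsukamoto's theorem plus an identification of the open step --- but it is not a proof, and none exists in the paper.
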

If true, this would give an effective classification of such alternating knots:  Tsukamoto's theorem gives a recursive construction of all almost-alternating diagrams of the two-component unlink, and each of these would arise from four possible band moves from alternating diagrams as shown below.  This set of alternating diagrams would  contain repetitions and would include diagrams of links as well as knots.
\begin{center}
\includegraphics[width=\textwidth]{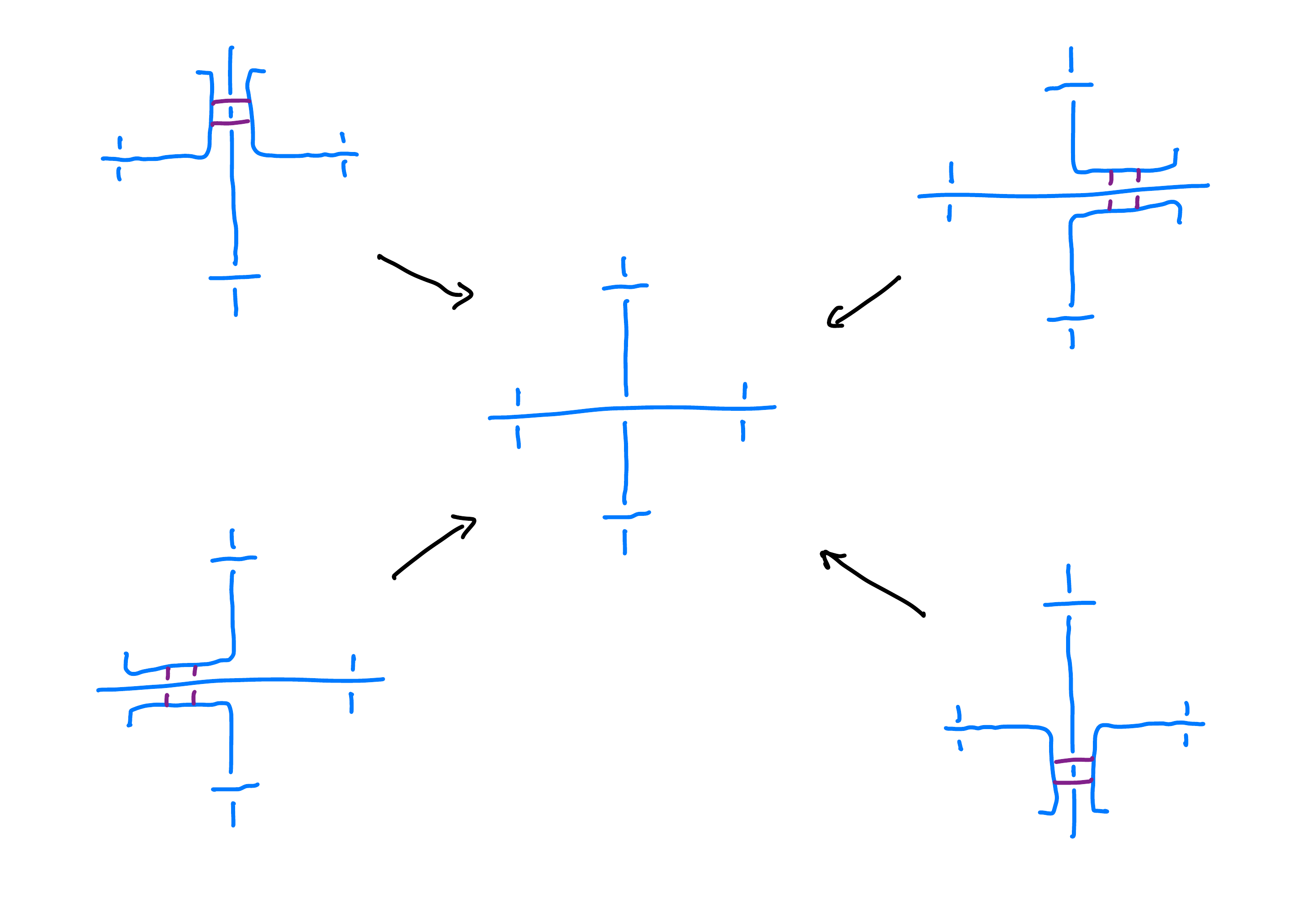} 
\end{center}

The algorithmic search for alternating knots may be considered a work in progress.  We have applied it to all 250,912,342 prime alternating knots of up to 20 crossings, using the prime alternating knot generator of Flint, Rankin, and de Vries \cite{PAKG}.  Of these, we find 438,375 are unobstructed from being slice by \Cref{lem:don} and 231,968 turn out to be algorithmically ribbon.  We also know of a further 628 examples of ribbon alternating knots of up to 20 crossings which are \emph{not} algorithmically ribbon.  The first of these was 12a631, shown to be ribbon by Seeliger \cite{seeliger} using symmetric union diagrams (see also \cite{knotinfo,lamm}).

How can we improve the algorithm?  One straightforward way is to implement more obstructions, including Levine-Tristram signatures and the Fox-Milnor condition (see for example \cite{liv2}).  Beyond this the hope is that we may understand better how ribbon disks can arise and expand the algorithm to find more of them.  Thus we will expand the set of knots on which our algorithm is able to determine the answer to the question ``Slice: yes or no?" and hopefully one day it will include all alternating knots.

For more details on the algorithm and to download Swenton's software package KLO in which it is implemented, we direct the reader to 
\begin{center}
\url{www.klo-software.net/ribbondisks}.
\end{center}

\subsection*{Acknowledgements}
I am very grateful to the organisers of Winter Braids IX for a wonderful conference and for their patience in waiting for these notes.  I also thank the participants of the school for being an encouraging and helpful audience.  Thanks to Sa\v{s}o Strle for helpful comments on an earlier draft, and to the anonymous referee for many helpful suggestions to improve the exposition.  I thank Alex Zupan and Jeff Meier for pointing out that a claimed positive answer to \Cref{q:unknot} in a previous version of these notes was unjustifed.


\bibliographystyle{amsplain}
\bibliography{WB}

\end{document}